\numberwithin{equation}{section}
\newcommand\eprint[1]{Eprint:~\texttt{#1}}
\newcommand{\QQ}{\mathbb Q}
\newcommand{\FF}{\mathbb F}
\newcommand{\GG}{\mathbb G}
\newcommand{\ZZ}{\mathbb Z}
\newcommand{\PP}{\mathbb P}
\newcommand{\A}{\mathcal A}
\newcommand{\F}{\mathcal F}
\newcommand{\Vc}{\overline{\mathcal V}}
\newtheorem{theorem}{Theorem}[section]
\newtheorem{lemma}[theorem]{Lemma}
\newtheorem{corollary}[theorem]{Corollary}
\newtheorem{definition-lemma}[theorem]{Definition-Lemma}
\theoremstyle{definition}
\newtheorem{definition}[theorem]{Definition}
\theoremstyle{remark}
\newtheorem{assumption}[theorem]{Assumption}
\begin{document}

\title[A stratification on the moduli of K3 surfaces]{A stratification on the moduli of K3 surfaces in positive characteristic}
\author{Gerard van der Geer}
\address{Korteweg-de Vries Instituut, Universiteit van
Amsterdam,
\newline Postbus 94248, 1090 GE Amsterdam, The Netherlands.}
\email{geer@science.uva.nl}

\begin{abstract} 
We review the results on the cycle classes of the strata 
defined by the height and the Artin invariant
on the moduli of K3 surfaces in positive characteristic 
obtained in joint work with Katsura and Ekedahl. 
In addition we prove a new irreducibility result 
for these strata.
\end{abstract}
\maketitle
\hfill{\sl Dedicated to the memory of Fritz Hirzebruch}
\section{Introduction} \label{sec-intro}
Moduli spaces in positive characteristic often possess stratifications for which we do not know 
characteristic $0$ analogues. A good example is the moduli space of elliptic curves in characteristic $p>0$.
If $E$ is an elliptic curve over an algebraically closed field $k$ 
of characteristic $p$, then multiplication
by $p$ on $E$ factors as 
$$
\times p :  E \stackrel{F} {\longrightarrow} E^{(p)} 
\stackrel{V}{\longrightarrow} E ,
$$
where Frobenius $F$ is inseparable and Verschiebung $V$ can be 
separable or inseparable. If $V$ is separable, then $E$ is called ordinary,
while if $V$ is inseparable $E$ is called supersingular. 
In the moduli space $\A_{1} \otimes k$ of elliptic
curves over $k$ there are finitely 
many points corresponding to supersingular elliptic curves, 
and a well-known formula by Deuring, dating from 1941, 
gives their (weighted) number:
$$
\sum_{E/k\,  {\rm supers.}\\ /\cong_k} \frac{1}{\# {\rm Aut}_k(E)} = \frac{p-1}{24}\, ,
$$
where the sum is over the isomorphism classes of supersingular elliptic curves and each curve is counted 
with a weight. We thus find a stratification of the moduli ${\A}_1\otimes {\FF}_p$ 
of elliptic curves with two strata: the ordinary stratum and the supersingular stratum. 
This stratification generalizes to the moduli of principally polarized abelian varieties of
dimension $g$ in positive characteristic where it leads to two stratifications, the Ekedahl-Oort
stratification with $2^g$ strata and the Newton-polygon stratification. 
These stratifications have been the
focus of much study in recent years (see for example 
\cite{O, vdG, E-vdG1, Mo, R, C-O}). 
The dimension of these strata are known and in the case of 
the Ekedahl-Oort stratification we also know by [E-vdG1] 
the cycle classes of these strata 
in the Chow groups of a suitable compactification. 
The formulas for such cycle classes can be seen as a
generalization of the formula of Deuring.

Besides abelian varieties, K3 surfaces form another 
generalization of elliptic curves. The stratification
on the moduli of elliptic curves in positive characteristic 
generalizes to a stratification of the moduli ${\F}_g$ of primitively 
polarized K3 surfaces  of degree $2g-2$ in positive 
characteristic. In fact, in the 1970s Artin and Mazur 
obtained in \cite{A-M} 
an invariant of K3 surfaces by looking at the formal Brauer group 
of a K3 surface. 
For an elliptic curve the distinction between ordinary and supersingular can be formulated 
by looking at the formal group, that is, 
the infinite infinitesimal neighborhood of the origin with the
inherited group law. If $t$ is a local parameter at the origin, then multiplication by $p$ is given by
$$
[p] \, t = a\, t^{p^h}+ \text{higher order terms} \eqno(1)
$$
with $a \neq 0$. Since multiplication by $p$ on $E$ is of degree $p^2$ 
and inseparable, we have  $1\leq h \leq 2$ 
and $h=1$ if $E$ is ordinary and $h=2$ if $E$ is supersingular.
The formal group allows a functorial description as the functor on spectra $S$ 
of local Artin $k$-algebras with residue field $k$
given by 
$$
S \mapsto \ker \{ H^1_{\rm et} (E\times S, {\GG}_m) \to H^1_{\rm et} (E,{\GG}_m)\},
$$
where $H^1_{\rm et}(E,{\GG}_m)\cong H^1(E,{\mathcal O}_E^*)$ classifies line bundles 
on $E$. The invariant of Artin and Mazur generalizes this. For a K3 surface $X$ they looked
at the functor of local Artinian schemes over $k$ 
with residue field $k$ given by
$$
S \mapsto \ker \{ 
H^2_{\rm et} (X\times S, {\GG}_m) \to H^2_{\rm et} (X,{\GG}_m)\},
$$
and showed that it is representable by a formal Lie group,
called the formal Brauer group. Its tangent space is $H^2(X,{\mathcal O}_X)$, so we 
have a $1$-dimensional formal group.
Now over an algebraically closed field $1$-dimensional formal groups are classified by their height: 
in terms of a local coordinate $t$ multiplication by $p$ is either zero 
or takes the form 
$[p]\, t = a \, t^{p^h}+ $ higher order terms, with $a \neq 0$. 
If multiplication by $p$ vanishes we say $h=\infty$, 
and then we have 
the formal additive group $\hat{\GG}_a$ and if $h<\infty$ 
we have a $p$-divisible 
formal group. 

Artin and Mazur connected this invariant $h(X)$ to the geometry 
of the K3 surface
by proving that if $h(X) \neq \infty$ then
$$
\rho(X) \leq 22-2h(X), \eqno(2)
$$
where $\rho(X)$ is the Picard number of $X$. In particular, 
either we have $\rho=22$ (and then necessarily $h=\infty$), or $\rho \leq 20$.  

The case that $\rho=22$ can occur in positive characteristic as Tate observed: 
for example, in characteristic $p\equiv 3 \, (\bmod 4)$ 
the Fermat surface $x^4+y^4+z^4+w^4=0$ has $\rho=22$ (see \cite{T,S}). 

If $h(X)=\infty$ then the K3 surface $X$ is called supersingular. 
By the result of Artin and Mazur a K3 surface $X$ with $\rho(X)=22$ 
must be supersingular. In 1974 Artin conjectured the converse (\cite{A}): 
a supersingular K3 surface has $\rho(X)=22$. 
This has now been proved by Maulik,
Charles and Madapusi Pera for $p\geq 3$, see \cite{M,C,P2}.
In the 1980s Rudakov, Shafarevich and Zink proved that supersingular K3 surfaces 
with a polarization of degree $2$ have $\rho=22$ in characteristic 
$p\geq 5$, see
\cite{R-S-Z}.

The height is upper semi-continuous in families. 
The case $h=1$ is the generic case; in particular the K3 surfaces with
$h=1$ form an open set. By the inequality (2) we have
$$
1 \leq h \leq 10 \qquad {\rm or} \quad h=\infty.
$$ 
In the moduli space 
${\F}_g$ of primitively polarized  K3 surfaces of genus $g$ (or equivalently, of
degree $2g-2$) with $2g-2$ prime to $p$, the locus of K3 
surfaces with height $\geq h$ is locally closed and
has codimension $h-1$ and we thus have 
$11$ strata in the $19$-dimensional moduli space ${\F}_g$.
The supersingular locus has dimension $9$. Artin showed that it is
further stratified by the Artin invariant $\sigma_0$: assuming that $\rho=22$
one looks at 
the N\'eron-Severi group ${\rm NS}(X)$ with its intersection pairing; 
it turns out that the discriminant group
${\rm NS}(X)^{\vee}/{\rm NS(X)}$ is an elementary $p$-group isomorphic to
$({\ZZ}/p{\ZZ})^{2\sigma_0}$ and one thus
obtains another invariant.
The idea behind this 
is that, though $\rho=22$ stays fixed, divisor classes in the limit
might become divisible by $p$, thus changing the N\'eron-Severi lattice and
$\sigma_0$. The invariant $\sigma_0$ is lower semi-continuous.
The generic case (supersingular) is where
$\sigma_0=10$ and the most degenerate case is the so-called superspecial 
case $\sigma_0=1$. 

In total one obtains a stratification on the moduli space ${\mathcal F}_g$ 
of K3 surfaces with a
primitive polarization of genus $g$ 
with $20$ strata $V_j$ with $1 \leq j \leq 20$ 
$$
\overline{V}_j=
\{ [X]\in {\mathcal F}_g : h(X) \geq j\} \qquad  1\leq j \leq 10, 
$$
and
$$
\overline{V}_j=
\{ [X] \in {\mathcal F}_g : h(X)=\infty, \,  \sigma_0(X) \leq 21-j\} \qquad 
 11\leq j \leq 20 \, ,
$$
the closures $\overline{V}_j$ of which
 are linearly ordered by inclusion. 
In joint work with Katsura \cite{vdG-K1} we determined the
cycle classes of the strata 
$\overline{V}_j$ ($j=1,\ldots,10$) of height $h\geq j$ 
$$
[V_j]= (p-1)(p^2-1)\cdots (p^{j-1}-1)\, \lambda_1^{j-1}, \qquad 1\leq j \leq 10 
\eqno(3)
$$
where 
$\lambda_1= c_1(\pi_* (\Omega^2_{\mathcal X / {\F}_g}))$
is the Hodge class
with $\pi: {\mathcal X} \to {\F}_g$ the universal family.
We also determined the class of the supersingular locus $\overline{V}_{11}$.
Moreover, we proved that the singular locus of $\overline{V}_j$ is
contained in the stratum of the supersingular 
locus where the Artin invariant is at most $j-1$, see \cite[Thm.\ 14.2]{vdG-K1}. Ogus made this more
precise in \cite{Og2}. For more on the moduli of supersingular K3 surfaces
we also refer to \cite{Og1, R-S2,L}.
 
But the
cycle classes of the other strata $\overline{V}_j$ 
for $j=12,\ldots, 20$ given by the Artin
invariant turned out to be elusive.
In joint work with Ekedahl \cite{E-vdG2} we developed a 
uniform approach by applying the philosophy of \cite{vdG,E-vdG1} 
of interpreting
these stratifications in terms of flags on the cohomology and 
eventually were able to determine all cycle classes. 
All these cycle classes
are multiples of powers of the Hodge class 
$\lambda_1$.

Our approach uses flags on the de Rham cohomology, 
here on $H^2_{\rm dR}$ as opposed to $H^1_{\rm dR}$
for abelian varieties. The space $H^2_{\rm dR}(X)$ is 
provided with a non-degenerate intersection form 
and it carries a filtration, the Hodge filtration.
But in positive characteristic it carries a second filtration 
deriving from the fact that we do not have a Poincar\'e lemma, 
or in other words, it derives from the Leray spectral sequence 
applied to the relative Frobenius morphism $X \to X^{(p)}$.
See later for more on this so-called conjugate filtration.
We thus find two filtrations on $H^2_{\rm dR}(X)$ and these are not
necessarily transversal. We say that $X$ is {\sl ordinary} if the 
two filtrations are transversal and that $X$ is {\sl superspecial} 
if the two filtrations coincide.
These are two extremal cases, but by considering the relative position
of flags refining the two flags one obtains a further discrete invariant 
and one 
retrieves in a uniform way the invariants encountered above, 
the height~$h$ and the Artin invariant~$\sigma_0$.

For applications it is important that we consider 
moduli of K3 surfaces together with an embedding of a 
non-degenerate lattice $N$ in the N\'eron-Severi group of $X$
such that it contains a semi-ample class of degree prime 
to the characteristic $p$,
and then look at the primitive part of the de Rham cohomology. 
If the dimension $n$ of this primitive cohomology is even, 
this forces us to deal with very subtle questions related to the
distinction of orthogonal group ${\rm O}(n)$ versus the special orthogonal 
group ${\rm SO}(n)$. 

Instead of working directly on the
moduli spaces of K3 surfaces, we work on the space of flags on the primitive
part of $H^2_{\rm dR}$, that is, we work on a flag bundle over the moduli space.
The reason is that the strata that are defined on this
space are much better behaved than the strata on the moduli 
of K3 surfaces itself.
In fact, up to infinitesimal order $p$ the strata on the flag space over ${\F}_g$ 
look like the strata (the Schubert cycles) on the flag space for the orthogonal group.
These strata are indexed by elements of a Weyl group.

In order to get the cycle classes of the strata on the moduli of K3 surfaces
we note that these latter strata are linearly ordered. This allows us to
apply fruitfully a Pieri type formula which expresses the intersection product
of a cycle class with a first Chern class (the Hodge class in our case) as
a sum of cycle classes of one dimension less. 

We apply this on the flag space and
then project it down. There are many more strata on the flag space of the
primitive cohomology than on the moduli space. Some of these strata, the so-called
final ones, map in an \'etale way to their image in the moduli space; for the
non-final ones, either the image is lower-dimensional, and hence its cycle class 
can be ignored, or the map is inseparable and factors through a final stratum
and the degree of the inseparable map can be calculated. In this way one arrives
at closed formulas for the cycle classes of the strata on the moduli space.

\smallskip
  
We give an example of the formula for the cycle classes from
\cite{E-vdG2} in the following special case. 
Let $p>2$ and $\pi: {\mathcal X} \to {\F}_g$ be the universal family of
K3 surfaces with a primitive polarization 
of degree $d=2g-2$ with $d$ prime to $p$. Then there are $20$
strata on the $19$-dimensional 
moduli space ${\F}_g$ parametrized by so-called final elements
$w_i$ with $1 \leq i \leq 20$ in the Weyl group of ${\rm SO}(21)$. 
These are ordered by their length $\ell(w_i)$ (in the sense of length in
Weyl groups)
starting with the
longest one.
The strata ${\mathcal V}_{w_i}$ for $i=1,\ldots 10$ are the strata
of height $h= i$, the stratum ${\mathcal V}_{w_{11}}$ is the supersingular stratum,
while the strata ${\mathcal V}_{w_i}$ for $i=11,\ldots,20$ are the strata where
the Artin invariant satisfies $\sigma_0 = 21-i$. 

\begin{theorem}
The cycle class of the closed stratum $\overline{\mathcal V}_{w_i}$ on the moduli space ${\F}_g$
is given by
\begin{eqnarray*}
{\rm i)} \quad 
[\Vc_{w_k}] &=& (p-1)(p^2-1)\cdots(p^{k-1}-1) \lambda_1^{k-1} \quad
\hbox{\rm if $1\leq k\leq 10$,}\\
{\rm ii)} \quad [\Vc_{w_{11}}] &=&\frac{1}{2} (p-1)(p^2-1)\cdots(p^{10}-1) 
\lambda_1^{10},\\
{\rm iii)} \quad 
[\Vc_{w_{10+k}}] &=&\frac{1}{2}
\frac{(p^{2k}-1)(p^{2(k+1)}-1)\cdots(p^{20}-1)}{(p+1)\cdots(p^{11-k}+1)}
\lambda_1^{9+k} \quad
\hbox{\rm if $2\leq k\leq 10$.}
\end{eqnarray*}
\end{theorem}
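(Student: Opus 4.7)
The plan is to derive all twenty cycle classes simultaneously by working not on ${\F}_g$ directly but on the flag bundle $\pi\colon \mathcal{G}\to {\F}_g$ whose fibre parametrises full isotropic flags on the primitive part of $H^2_{\rm dR}$ that refine the Hodge filtration. On $\mathcal{G}$ one obtains a finer stratification from the relative position of such a flag with the corresponding flag attached to the conjugate filtration; its strata are indexed by elements $w$ of the Weyl group of $\mathrm{SO}(21)$, and by the ``approximation modulo $p$'' result referred to in the introduction they infinitesimally resemble Schubert cells of the orthogonal flag variety. In particular, the Hodge line bundle $\lambda_1$ pulls back to a natural class on $\mathcal{G}$, and one has an orthogonal Pieri-type rule expressing $c_1(\lambda_1)\cdot[\Vc_w]$ on the flag space as an explicit integral combination of classes $[\Vc_{w'}]$ with $w'$ obtained from $w$ by certain simple descents.

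The first step is to perform an intersection-theoretic induction on $\mathcal{G}$. Starting from $[\mathcal{G}]=1$ and descending through the linearly ordered chain of closed flag-strata, each application of the Pieri rule multiplies the accumulated class by a coefficient which, for the root system $B_{10}/D_{10}$, takes the form $p^i\pm 1$. These factors build up exactly into the products $(p-1)(p^2-1)\cdots(p^{k-1}-1)$ appearing in (i) and into the more intricate expressions involving $(p^i+1)$ factors that appear in (iii). The second step is to push the resulting classes forward to ${\F}_g$. The flag strata split into \emph{final} ones---which map étale onto their image---and \emph{non-final} ones, whose image is either of strictly smaller dimension (so the push-forward vanishes) or coincides with a final image, in which case $\pi$ is purely inseparable of a degree that is read off from the combinatorial data attached to $w$. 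Combining the Pieri output on $\mathcal{G}$ with these degrees yields (i) directly for the height strata, (ii) for the supersingular locus, and the quotient expression in (iii) for the Artin-invariant strata.

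The main obstacle I expect is the careful bookkeeping forced by the distinction between $\mathrm{O}(21)$ and $\mathrm{SO}(21)$, which is ultimately what produces the factor $\tfrac{1}{2}$ in (ii) and (iii). Concretely, at the transition from the height strata to the Artin-invariant strata the closed flag-stratum sitting over the supersingular locus breaks into two $\mathrm{SO}$-components exchanged by the spinor involution, so the map from each component down onto the supersingular locus of ${\F}_g$ acquires an extra factor of degree $2$, which then propagates consistently through the nine remaining Artin strata below it. Verifying that the Pieri coefficients on $\mathcal{G}$, the inseparability degrees of the non-final strata that dominate each $\Vc_{w_{10+k}}$, and this spinor factor of $2$ really do combine into the exact quotient form in (iii)---and that the identities $(p^{2j}-1)=(p^j-1)(p^j+1)$ collapse the combined numerator and denominator into the single closed expression stated in the theorem---is the most delicate technical part of the argument.
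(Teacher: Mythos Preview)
Your overall strategy is essentially the paper's: lift to the flag bundle, apply a Pieri formula (the paper uses the Pittie--Ram formula) to descend through the strata, sort the resulting terms according to whether the Weyl group element is final (\'etale onto its image) or non-final (either drops dimension or is purely inseparable onto a final stratum), and push down to $\mathcal{F}_g$.

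One point needs correcting. You attribute the factor $\tfrac{1}{2}$ in (ii) and (iii) to the $\mathrm{O}(21)$ versus $\mathrm{SO}(21)$ distinction and to a spinor involution splitting the supersingular flag-stratum into two components. But $n=21$ is odd, so this is the $B_{10}$ situation: $\mathrm{O}(2m{+}1)\cong\mathrm{SO}(2m{+}1)\times\{\pm 1\}$ with $-1$ acting trivially on isotropic flags, and there is no two-component splitting of the space of maximal isotropics. The twist phenomenon you describe---two $\mathrm{SO}$-orbits of maximal isotropics, the incomparable final elements $w_m$ and $w_{m+1}$, the index-two inclusion $W_m^D\subset W_m^C$---is the even-$n$ story and is not what is happening here. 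In the odd case the $\tfrac{1}{2}$ enters at the step from $w_m$ to $w_{m+1}$ through the combinatorics of the short simple root $s_m=(m,m{+}2)$ in $W_m^B$; the Pieri and degree analysis there produces $\tfrac{1}{2}(p^m-1)$ rather than $(p^m-1)$. So the delicate bookkeeping you anticipate is real, but its source is the short-root behaviour in type $B$, not an $\mathrm{O}/\mathrm{SO}$ component split.
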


Here $\lambda_1=c_1(L)$ with $L=\pi_* (\Omega^2_{{\mathcal X}/{\mathcal F}_g})$ 
is the Hodge class. Sections of $L^{\otimes r}$ correspond to modular forms
of weight $r$.
It is known (cf.\ \cite{vdG-K2} ) that the  class
$\lambda_1^{18}\in {\rm CH}_{\QQ}^{18}({\mathcal F}_g)$ 
vanishes on ${\mathcal F}_g$. But the formulas
can be made to work also on the closure of the image ${\mathcal F}_g$ 
embedded in projective
space by the sections of a sufficiently high power of $L$, 
so that the last two formulas (involving $\lambda_1^{18}$ and $\lambda_1^{19}$) 
are non-trivial and still make sense. 
Note here that $\lambda_1$ is an ample class;
this is well-known by Baily-Borel in characteristic $0$, but  now
we know it too in characteristic $p\geq 3$ by work
of Madapusi-Pera \cite{P1} and Maulik \cite{M}.

In particular, we can give an explicit formula for 
the weighted number of superspecial K3 surfaces of genus~$g$ by using a 
formula for $\deg(\lambda_1^{19})$ from \cite{G-H-S}.
We consider the situation where we have a primitive polarization of degree
$d=2d'$ inside $N$ with with
$$
N^{\bot}= 2 \, U \bot m \, E_8(-1) \bot \langle -d \rangle \, , \eqno(4)
$$
where $U$ is a hyperbolic plane and $m=0$ or $m=2$.

\begin{theorem} 
The weighted number 
$$
\sum_{X \, {\rm superspecial}/ \cong} \frac{1}{\# {\rm Aut}_k(X)} 
$$
of superspecial K3 surfaces with a primitive polarization $N$ 
of degree $d=2d^{\prime}$ prime to the characteristic $p$ with $N^{\bot}$
as in (1), 
is given by
$$
\frac{-1}{2^{4m+1}}\frac{p^{8m+4}-1}{p+1} \, 
\left( (d^{\prime})^{10} \prod_{\ell|d^{\prime}}  (1+\ell^{-4m-2}) \right) 
\, \zeta(-1)\zeta(-3) \cdots \zeta(-8m-3)
$$
where $\zeta$ denotes the Riemann zeta function and $\ell$ runs over the primes
dividing $d'$.
\end{theorem}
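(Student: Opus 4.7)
My plan is to express the weighted count as the degree, on a smooth compactification of the $N$-polarized moduli space $\F_N$, of the $0$-dimensional cycle class of the superspecial stratum, and then to factor this degree as (cycle-class coefficient) $\cdot \deg(\lambda_1^{\dim \F_N})$. The two factors will be computed independently and multiplied.

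First, I would establish the cycle-class formula for the superspecial stratum in this $N$-polarized setting. With $N^\perp$ as in (4), the primitive de Rham cohomology has rank $8m+5$, so the relevant orthogonal group is ${\rm SO}(8m+5)$, $\F_N$ has dimension $8m+3$, and the superspecial (deepest) stratum is $0$-dimensional. The same Pieri-style argument that proves Theorem 1(iii) for the top index $k=n$ in the case $n=10$ is uniform in the odd rank $2n+1$ of the primitive cohomology; specializing to $n=4m+2$ it gives
$$[\Vc_{\mathrm{ss}}]=\frac{1}{2}\,\frac{p^{8m+4}-1}{p+1}\,\lambda_1^{8m+3}.$$

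Next, I would compute $\deg(\lambda_1^{8m+3})$ by reducing to characteristic $0$. Since $L=\pi_*\Omega^2_{\mathcal X/\F_N}$ and a suitable compactification of $\F_N$ both lift to ${\rm Spec}\,\ZZ[1/2d]$ by the work of Madapusi-Pera, this degree is independent of the prime $p\nmid 2d$ and equals its value in characteristic $0$. There, the Hirzebruch-Mumford proportionality principle identifies $\deg(\lambda_1^{8m+3})$ with a fixed universal constant times the Hirzebruch-Mumford volume of the arithmetic subgroup of ${\rm O}(N^\perp\otimes\RR)$ uniformizing $\F_N$; Gritsenko, Hulek and Sankaran work out this volume in \cite{G-H-S} for lattices of the shape (4), producing
$$\deg(\lambda_1^{8m+3})=\frac{-1}{2^{4m}}\,(d')^{10}\prod_{\ell\mid d'}\!\bigl(1+\ell^{-4m-2}\bigr)\,\zeta(-1)\zeta(-3)\cdots\zeta(-8m-3).$$
Multiplying the two factors gives the formula of Theorem 2.

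The hard part will be the bookkeeping in the comparison of normalizations. Concretely, I expect to spend most of the effort matching the Hirzebruch-Mumford volume normalization of \cite{G-H-S} to the self-intersection of the specific Hodge class $\lambda_1$, tracking the power of $2$ from the ${\rm O}/{\rm SO}$-ambiguity, the Euler factors $1+\ell^{-4m-2}$ coming from the local volume of $N^\perp$ at primes $\ell\mid d'$, and the signs arising from $\zeta(-(2i-1))=-B_{2i}/(2i)$. A secondary but essential point to check is that every superspecial K3 surface yields a point of $\F_N$ rather than of the boundary of its compactification, so that the weighted count really equals the degree of $[\Vc_{\mathrm{ss}}]$; this should follow from the ampleness of $\lambda_1$ due to Madapusi-Pera \cite{P1} and Maulik \cite{M}.
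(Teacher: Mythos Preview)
Your proposal is correct and follows exactly the approach the paper indicates: the paper does not give a detailed proof of this theorem but states just before it that the formula follows by combining the cycle class of the superspecial stratum (a multiple of $\lambda_1^{8m+3}$, obtained from Theorem~\ref{Oddcycleclasses} with half-rank $4m+2$) with the formula for $\deg(\lambda_1^{8m+3})$ from \cite{G-H-S}. Your additional remarks on the reduction to characteristic~$0$ via \cite{P1,M} and on the normalization bookkeeping flesh out precisely what the paper leaves implicit.
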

For $m=0$ this formula can be applied to count the number of Kummer surfaces
coming from superspecial principally polarized abelian surfaces and the
formula then agrees with the formulas of \cite{E, vdG}.

Formulas like those given in (3) and in Theorem 1.1 for the classes 
of the height  strata were obtained in joint work with Katsura
\cite{vdG-K1} by different (ad hoc) methods using formal groups and Witt vector
cohomology; but these methods did not suffice to calculate the cycle classes
of the Artin invariant strata.

A simple corollary is (see \cite[Prop.\ 13.1]{E-vdG2}).

\begin{corollary} A supersingular (quasi-)elliptic K3 surface with a section 
cannot have Artin invariant $\sigma_0=10$.
\end{corollary}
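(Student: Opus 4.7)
The plan is to reduce the question to a lattice-theoretic statement about ${\rm NS}(X)$ and then derive a contradiction from the classification of even definite unimodular lattices. First I would observe that a (quasi-)elliptic fibration $\pi\colon X \to \PP^1$ with a section $S$ and a fibre $F$ produces a sublattice $\langle F,S\rangle \subset {\rm NS}(X)$ with Gram matrix $\left(\begin{smallmatrix}0 & 1\\ 1 & -2\end{smallmatrix}\right)$, which has determinant $-1$ and is therefore unimodular; via $S \mapsto S+F$ it is isomorphic to the hyperbolic plane $U$, and, being unimodular, it splits off as an orthogonal direct summand. Hence ${\rm NS}(X) = U \oplus M$ with $M$ an even negative-definite lattice of rank $20$.

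Next I would argue by contradiction, assuming $\sigma_0(X)=10$. By definition this means ${\rm NS}(X)^\vee/{\rm NS}(X) \cong (\ZZ/p)^{20}$, and the unimodularity of $U$ then gives $M^\vee/M \cong (\ZZ/p)^{20}$. The Smith normal form of $M \hookrightarrow M^\vee$ is therefore $p\,I_{20}$, so one can choose a $\ZZ$-basis $e_1,\ldots,e_{20}$ of $M^\vee$ for which $pe_1,\ldots,pe_{20}$ is a basis of $M$; writing the Gram matrix of $M$ in this basis as $p^2\bigl(\langle e_i,e_j\rangle\bigr)$ with $\langle e_i,e_j\rangle \in \tfrac{1}{p}\ZZ$, one verifies that every entry lies in $p\ZZ$.

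This would allow the crucial rescaling: set $M' := (M,\tfrac{1}{p}\langle\cdot,\cdot\rangle)$. Then $M'$ is still $\ZZ$-valued, of rank $20$, still negative definite, and unimodular since $|\det M'| = |\det M|/p^{20} = 1$. For $p$ odd, $M'$ is also even, as for $v \in M$ one has $\langle v,v\rangle \in 2\ZZ$ (evenness of ${\rm NS}(X)$) and $\langle v,v\rangle \in p\ZZ$ (by the divisibility above), so $\langle v,v\rangle \in 2p\ZZ$. Invoking the classical theorem that an even definite unimodular $\ZZ$-lattice has rank divisible by $8$ then contradicts ${\rm rank}\,M'=20$, proving $\sigma_0(X) \leq 9$.

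The heart of the argument --- and the step I expect to require the most care --- is justifying the rescaling $M \mapsto M'$: it is precisely the maximality of $\sigma_0$ (which forces the discriminant group to be $p$-torsion with no higher $p$-power elementary divisors) that makes $\tfrac{1}{p}\langle\cdot,\cdot\rangle$ integer-valued. The parity step also genuinely uses $p$ odd; the characteristic $p=2$ case would need a separate argument, but this is consistent with the paper's standing assumption $p>2$.
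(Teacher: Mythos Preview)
Your proof is correct and takes a genuinely different route from the paper's. The paper deduces the corollary from its stratification machinery: taking $N=U$ (the hyperbolic plane spanned by a fibre and the section) makes the primitive cohomology even-dimensional with $n=20$, $m=10$, and the analysis of final types on $H^2_{\rm dR}$ together with the emptiness clauses of Theorems~\ref{Untwistedclasses} and~\ref{Twistedclasses} rules out the stratum corresponding to $\sigma_0=n/2=10$. Your argument is instead purely lattice-theoretic: after splitting off $U$ from ${\rm NS}(X)$, the hypothesis $\sigma_0=10$ forces the rank-$20$ orthogonal complement $M$ to satisfy $M^{\vee}=\tfrac{1}{p}M$, so the rescaled form $\tfrac{1}{p}\langle\cdot,\cdot\rangle$ turns $M$ into an even (here $p$ odd is used) unimodular definite lattice of rank~$20$, contradicting the classical constraint that such a rank must be divisible by~$8$. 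Your approach is elementary and self-contained --- essentially the lattice method of Kondo--Shimada~\cite{K-S}, which the paper itself cites as an independent proof --- whereas the paper's route, though heavier, embeds the result in a uniform framework that simultaneously yields the cycle classes of \emph{all} height and Artin-invariant strata, so this particular non-existence statement falls out of the general picture rather than requiring an ad~hoc argument.
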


This result was obtained independently by Kondo and Shimada using a 
different method in \cite[Cor.\ 1.6]{K-S}.

In addition to reviewing the results from \cite{E-vdG2} 
we prove irreducibility results  for the strata; about half 
of the strata on the moduli space ${\mathcal F}_g$ are 
shown to be irreducible. Here we use the local structure of the 
strata on the flag space.

\begin{theorem} Let $p\geq 3$ prime to the degree $d=2g-2$.
For a final element $w \in W_m^B$ (resp.\ $w \in W_m^D)$ of length $\ell(w)\geq m$ the stratum
$\overline{\mathcal V}_w$ in ${\mathcal F}_g$ is irreducible.
\end{theorem}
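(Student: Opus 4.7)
The plan is to lift the question from the moduli space $\mathcal{F}_g$ to the flag bundle $\pi:\mathcal{T}\to\mathcal{F}_g$ parametrising isotropic flags on the primitive part of $H^2_{\rm dR}$, where the stratification admits a clean Schubert-cell local model for the orthogonal group, and then to descend the irreducibility along $\pi$. First I would use that for a final Weyl element $w$, the restriction of $\pi$ to the flag-space stratum $\mathcal{V}_w^{\mathcal T}\subset\mathcal{T}$ is generically \'etale onto its image $\mathcal{V}_w\subset\mathcal{F}_g$; this is a defining feature of final elements in the setup of \cite{E-vdG2} and is the reason one can project cycle classes from $\mathcal T$ to $\mathcal F_g$ at all. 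Consequently $\overline{\mathcal V}_w$ is the closure of $\pi(\mathcal V_w^{\mathcal T})$, and it suffices to prove that $\mathcal V_w^{\mathcal T}$ itself is irreducible.

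Next, the local model emphasised in the introduction provides, \'etale-locally on $\mathcal T$, an identification of $\mathcal V_w^{\mathcal T}$ with the Schubert cell $C_w$ inside the flag variety $G/B$ for $G={\rm SO}(2m+1)$ (type $B$) or $G={\rm SO}(2m)$ (type $D$). Since $C_w$ is an affine space of dimension $\ell(w)$, this shows that $\mathcal V_w^{\mathcal T}$ is smooth and equidimensional, and the same model prescribes its closure relations by the Bruhat order. For a smooth scheme, irreducibility is equivalent to connectedness, so the remaining task is to establish that $\mathcal V_w^{\mathcal T}$ is connected.

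I would then prove connectedness by downward induction on $\ell(w)$, starting from the open (ordinary) stratum, which is irreducible because $\mathcal F_g$ itself is geometrically irreducible. At the inductive step, one picks $w'$ with $w\lessdot w'$ in the Bruhat order whose flag stratum is already known to be irreducible and has $\mathcal V_w^{\mathcal T}$ in its boundary; the local Schubert picture then forces every connected component of $\mathcal V_w^{\mathcal T}$ to arise as a specialisation of a dense open of the connected $\mathcal V_{w'}^{\mathcal T}$. The hypothesis $\ell(w)\geq m$ enters precisely here: it is the numerical threshold that guarantees the Pieri-type degeneration from $\mathcal V_{w'}^{\mathcal T}$ to $\mathcal V_w^{\mathcal T}$ has a single covering Bruhat relation, so no new components can be introduced. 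Below this threshold the degeneration really does split (as is witnessed concretely by the deep Artin-invariant strata and, in the extreme, by the superspecial locus, which is a disconnected finite set).

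The main obstacle is exactly this inductive step: verifying that the \'etale-local Schubert picture glues to a globally connected stratum on $\mathcal T$. Smoothness, Bruhat closure relations, generic \'etaleness of $\pi$ over final strata, and irreducibility of the ambient $\mathcal F_g$ are all in place from \cite{E-vdG2}; what must be added is the combinatorial/monodromy statement that, when $\ell(w)\geq m$, the Bruhat covering $w\lessdot w'$ is unique enough that no splitting of $\mathcal V_w^{\mathcal T}$ along its boundary is possible, so that connectedness — and hence irreducibility — propagates from $\overline{\mathcal V}_{w'}$ to $\overline{\mathcal V}_w$.
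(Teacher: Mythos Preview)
Your overall architecture---lift to the flag space, use normality of $\overline{\mathcal U}_w$ to reduce irreducibility to connectedness, then push down along $\pi$---matches the paper. The gap is entirely in the inductive step for connectedness.

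Knowing that $\overline{\mathcal U}_{w'}$ is irreducible and that $\overline{\mathcal U}_w$ sits inside it as a codimension-one Bruhat stratum does \emph{not} force $\overline{\mathcal U}_w$ to be connected. An irreducible variety can perfectly well contain a disconnected divisor; the local Schubert model only tells you that $\overline{\mathcal U}_w$ is \emph{locally} irreducible (normal), which you already had. Your claimed ``uniqueness'' of the Bruhat cover $w\lessdot w'$ for $\ell(w)\ge m$ is neither true in $W_m^B$ (the final elements are linearly ordered, but many non-final $w'$ also cover $w$) nor sufficient: even a unique boundary divisor in an irreducible variety may be disconnected. So the induction does not propagate connectedness, and the proof as written does not close.

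The paper's argument supplies exactly the missing global input. It does \emph{not} induct down the Bruhat order. Instead it (i) uses ampleness of $\lambda_1$ together with the cycle-class formulas $[\overline{\mathcal V}_{w_i}]=c\,\lambda_1^{i-1}$ to conclude that the strata $\overline{\mathcal V}_{w_i}$ on the base are connected for $i<2m$; (ii) shows, via a Raynaud-trick argument, that every irreducible component of every $\overline{\mathcal U}_w$ contains a superspecial point (a point of $\mathcal U_1$); (iii) analyses the fibre of $\mathcal B_N\to\mathcal F_N$ over a superspecial point as a $p$-unitary flag space and proves that the union of the one-dimensional strata $\cup_j\overline{\mathcal U}_{s_j}$ is connected there (these pieces are $\mathbb P^1$'s and a Fermat curve meeting in their $\mathbb F_{p^2}$-points); and (iv) checks that for $i\le m$ the reduced expression $w_i=s_is_{i+1}\cdots s_m s_{m-1}\cdots s_1$ contains \emph{every} simple reflection, so $\overline{\mathcal U}_{w_i}$ contains the full one-skeleton. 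Combining (ii)--(iv), every component of $\overline{\mathcal U}_{w_i}$ meets a common connected one-skeleton, hence $\overline{\mathcal U}_{w_i}$ is connected. The hypothesis $\ell(w)\ge m$ enters through (iv), not through any uniqueness of Bruhat covers.
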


So the strata above the supersingular locus are all irreducible.
We have a similar result in ${\mathcal F}_N$.

The formulas we derived deal with the group ${\rm SO}(n)$; 
for K3 surfaces we can restrict $n \leq 21$, but the formulas for larger $n$ 
might find  applications to the moduli of hyperk\"ahler varieties in positive 
characteristic (by looking at the middle dimensional de Rham cohomology
or at $H^2_{\rm dR}$ equiped with the Beauville-Bogomolov form).

\section{Filtrations on the de Rham cohomology of a K3 surface}
Let $X$ be a K3 surface over an algebraically closed field of 
characteristic $p>2$ and let $N \hookrightarrow {\rm NS}(X)$ be an isometric embedding of a non-degenerate 
lattice in the N\'eron-Severi group ${\rm NS}(X)$ (equal to the Picard group for a
K3 surface) 
and assume that $N$ contains a semi-ample line bundle and that the discriminant
of $N$ is coprime with $p$ (that is,
$p$ does not divide $\# N^{\vee}/N$). 
We let $N^{\bot}$ be the primitive cohomology, 
that is, the orthogonal complement of
the image of $c_1(N)$ of $N$ in $H^2_{\rm dR}(X)$.
It carries a Hodge filtration 
$$
0 = U_{-1} \subset U_0 \subset U_1 \subset U_2=N^{\bot}
$$
of dimensions $0,1,n-1,n$ and comes with a non-degenerate intersection form 
for which the Hodge filtration is self-dual: $U_0^{\bot}=U_1$. Now in
positive characteristic we have another filtration
$$
0 = U_{-1}^c \subset U_0^c \subset U_1^c \subset U_2^c =N^{\bot} \, ,
$$
the conjugate filtration; it is self-dual too.
The reason for its existence 
is that the Poincar\'e lemma does not hold in positive
characteristic. If $F: X \to X^{(p)}$ is the relative Frobenius
morphism then we have a canonical (Cartier) isomorphism
$$
C: {\mathcal H}^j(F_*\Omega_{X/k}^{\bullet}) \cong \Omega^j_{X^{(p)}/k}
$$
and we get a non-trivial spectral sequence from this:
the second spectral sequence of hypercohomology 
with $E_2$-term 
$E^{ij}_2=H^i(X^{(p)},{\mathcal H}^j(\Omega^{\bullet}))$
which by the inverse Cartier isomorphism $C^{-1}: \Omega^j_{X^{(p)}} \simeq 
{\mathcal H}^j(F_*(\Omega^{\bullet}_{X/k}))$ 
can be rewritten as
$H^i(X^{(p)},\Omega^j_{X^{(p)}/k})$,
degenerating at the $E_2$-term and abutting to $H^{i+j}_{\rm dR}(X/k)$. 
This leads to a second filtration on the de Rham cohomology. 

The inverse Cartier operator gives an isomorphism 
$$
F^*(U_i/U_{i-1})\cong U_{2-i}^c/U^c_{1-i}\, .
$$ 
As a result we have two (incomplete) flags 
forming a so-called F-zip in the sense of \cite{M-W}.
Unlike the characteristic zero situation
 where the Hodge flag and its complex conjugate are
transversal, the two flags in our situation are not 
necessarily transversal. In fact, the 
K3 surface $X$ is called ordinary if these flags are transversal 
and superspecial if they coincide. These are just two extremal 
cases among more possibilities.

Before we deal with these further possibilities,
we recall some facts about isotropic flags in a
non-degenerate orthogonal space. Let $V$ be a non-degenerate orthogonal
space of dimension $n$ over a field of characteristic $p>2$. We have
to distinguish the cases $n$ odd and $n$ even, the latter being more subtle.
We look at isotropic flags 
$$
(0) =V_0 \subset V_1 \subset \cdots \subset V_r
$$
with $\dim V_i=i$ in $V$, that is, 
we require that the intersection form vanishes on $V_r$.
We call the flag maximal if $r=[n/2]$. We can complete a maximal flag by
putting $V_{n-j}=V_j^{\bot}$. Now if $n=2m$ is even, a complete isotropic 
flag $V_{\bullet}$ determines another complete isotropic flag by putting
$V_i^{\prime}=V_i$ for $i < n/2$
and by taking for $V_m^{\prime}$ the unique maximal isotropic space 
containing $V_{m-1}$ but different from $V_m$. We call this flag 
$V_{\bullet}^{\prime}$ the {\sl twist} of $V_{\bullet}$.

In fact, if $n$ is even the group ${\rm SO}(n)$ does not act transitively
on complete flags.

Given two complete isotropic flags their relative position is given by 
an element of a Weyl group. If $n$ is odd we let $W_m^B$ be the Weyl group
of ${\rm SO}(2m+1)$. It can be identified with the following 
subgroup of the symmetric group $\frak{S}_{2m+1}$
$$
\{ \sigma \in \mathfrak{S}_{2m+1} : \sigma(i)+\sigma(2m+2-i)=2m+2 
\text{ for all $1\leq i \leq 2m+1$} \}.
$$ 
The fact is now that the ${\rm SO}(2m+1)$-orbits of pairs of totally
isotropic complete flags are in 1-1 correspondence 
with the elements of $W_m^B$ given by
$$
w \longleftrightarrow \big(\sum_{j\leq i} k\cdot e_j, \sum_{j\leq i} 
k\cdot e_{w^{-1}(j)} \big)
$$
with the $e_i$ a fixed orthogonal basis with $\langle e_i, e_j \rangle 
=\delta_{i,2m+2-j}$. The simple reflections $s_i\in W_m^B$
for $i=1,\ldots,m$ are given by $s_i=(i, i+1)(2m+1-i, 2m+2-i)$ if $i<m$
and by $s_m=(m, m+2)$, and will play an important role here.

But in the case that $n=2m$ is even we have to replace the Weyl group $W_m^C$
(of ${\rm O}(2m)$) given by  
$$
\big\{ \sigma \in \mathfrak{S}_{2m} : \sigma(i)+\sigma(2m+1-i)=2m+1
\text{ for all $1\leq i \leq 2m$}\big\}
$$
by the index $2$ subgroup $W_m^D$ given by the extra parity condition
$$
\# \{ 1 \leq i \leq m : \sigma(i)>m\} \equiv \, 0 \, (\bmod 2) .
$$
The simple reflections $s_i\in W_m^D$ 
are given by $s_i=(i, i+1)(2m-i, 2m+1-i)$
for $i<m$ and by $s_m=(m-1,m+1)(m, m+2)$.
In the larger group $W_m^C$ we have the simple reflections $s_i$ with $1\leq i \leq m-1$ and $s_m^{\prime}=(m, m+1)$.  
Note that  $s_m^{\prime}$ commutes with the $s_i$ for $i=1,\ldots,m-2$
and conjugation by it interchanges $s_{m-1}$ and $s_m$.

The ${\rm SO}(2m)$-orbits of pairs of totally isotropic complete flags 
are in bijection with the elements of $W_m^C$ given by
$$
w \longleftrightarrow \big(\sum_{j\leq i} k\cdot e_j, \sum_{j\leq i} 
k\cdot e_{w^{-1}(j)} \big)
$$
with basis $e_i$ with $\langle e_i,e_j\rangle =\delta_{i,2m+1-j}$.
Twisting the first (resp.\ second) flag corresponds to changing $w$
to $ws_m'$ (resp.\ $s_m'w$). 

Back to K3 surfaces. 
We can refine the conjugate flag on $H^2_{\rm dR}(X)$ 
to a full (increasing) flag $D^{\bullet}$ 
and use the Cartier  operator to transfer it back to a decreasing flag 
$C^{\bullet}$ on the Hodge filtration $U_{\bullet}$. 
We thus get two full flags.

\begin{definition} A full flag refining the conjugate filtration is called
stable if $D_j\cap U_i + U_{i-1}$ is an element of the 
$C^{\bullet}$ filtration or of its twist. 
A flag is called {\sl final} if it is stable and complete.
\end{definition}

Final flags correspond to so-called {\sl final elements} in the Weyl group
defined as follows.
Elements in the Weyl group $W_m^B$ (resp.\ $W_{m}^D$) 
which are reduced with respect to the set of
roots obtained after removing the first root (so that the remaining roots form 
a root system of type $B_{m-1}$ (resp.\ $D_{m-1}$)) are called 
final elements.

If $n=2m+1$ is odd we have $2m$ final elements in $W_m^B$. These are
the elements $\sigma$ given by $[\sigma(1),\sigma(2),\ldots,\sigma(m)]$
and we can list these as $w_1=[2m+1,2,3,\ldots]$, $w_2=[2m,1,3,\ldots], \ldots,
w_{2m}=[1,2,\ldots,m]$. These final elements $w_j$ are linearly ordered
by their length $\ell(w_j)=2m-j$, with $w_1$ being the longest element
and $w_{2m}$ equal to the identity element.

If $n=2m$ then we also have $2m$ final elements in $W_m^C$, 
but these are no longer linearly ordered by their length,
but the picture is rather
\begin{displaymath}
\begin{xy}
\xymatrix{
   &  &  &   & w_{m+1} \ar[dr] \\
 w_1\ar[r]  & w_2\ar[r]& \cdots   \ar[r] & w_{m-1} \ar[dr]\ar[ur] & & w_{m+2}
\ar[r]  &\cdots \ar[r] & w_{2m}\\
            &  &                         & & w_{m} \ar[ur] \\
}
\end{xy}
\end{displaymath}
Conjugation by $s_m^{\prime}$ interchanges the
two final elements of length $m-1$. This corresponds to twisting a complete
isotropic flag. We shall denote $w_js_m^{\prime}$ by $w_j^{\prime}$ and
we shall call these {\sl twisted final elements}.

The following theorem shows that we can read off the height $h(X)$ of the 
formal Brauer group and the Artin invariant $\sigma_0(X)$ from these final 
filtrations on $H^2_{\rm dR}$.
The following theorem is proven in \cite{E-vdG2}.
Recall that the discriminant of $N$ is assumed to be prime to $p$.

\begin{theorem} 
Let $X$ be a K3 surface with an 
embedding $N \hookrightarrow {\rm NS}(X)$ and 
let $H\subset H^2_{\rm dR}$ be the primitive part of the 
cohomology with $n=\dim(H)$ and $m=[n/2]$. 
Then $H$ possesses a final filtration; all final filtrations 
are of the same combinatorial type $w$. Moreover, 
\begin{enumerate} 
\item[i)] $X$ has finite height $h < n/2$  if and only if $w=w_h$ or $w_h'$.
\item[ii)] $X$ has finite height $h=n/2$ if and only if $w=w_m'$.
\item[iii)] $X$ has Artin invariant $\sigma_0< n/2$  if and only 
if $w=w_{2m+1-\sigma_0}$ or $w= w_{2m+1-\sigma_0}'$.
\item[iv)] $X$ has Artin invariant $\sigma_0=n/2$  
if and only if $w=w_{m+1}$.
\end{enumerate}
\end{theorem}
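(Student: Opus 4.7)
The plan is to prove the theorem in two steps: first, establish the existence of a final filtration on $H$ and the well-definedness of its combinatorial type $w$; second, identify $w$ with the classical invariants $h(X)$ and $\sigma_0(X)$.

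For existence, I would construct a canonical final flag refining the conjugate filtration $U^c_\bullet$ by an Ekedahl--Oort--style iterative procedure. Starting with the line $U_0^c$, one builds successive subspaces by intersecting with the steps of the Hodge flag $U_\bullet$, transporting via the Cartier isomorphism $F^*(U_i/U_{i-1}) \cong U^c_{2-i}/U^c_{1-i}$, and then taking sums. Closing up under these F-zip operations produces a complete flag that is stable in the sense of Definition 2.1 by construction, hence final. Any other final flag must agree with this canonical one in combinatorial type by the uniqueness of the F-zip decomposition, pinning down a well-defined element $w \in W_m^B$ (respectively $W_m^D$).

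For the identification, recall that by Artin--Mazur the height $h(X)$ is the height of the formal Brauer group, whose tangent space is $U_0 = H^2(X,{\mathcal O}_X)$. Working in the crystalline cohomology $H^2_{\rm crys}(X/W)$ and its reduction modulo $p$, the height can be extracted from the F-zip as the smallest $h$ such that, after $h-1$ alternating applications of the inverse Cartier isomorphism and the inclusion into the conjugate filtration, the image of $U_0$ escapes $U_1^c$. Converting this condition into the one-line notation $[\sigma(1),\ldots,\sigma(m)]$ used earlier to list $w_1,\ldots,w_{2m}$ yields $w=w_h$ in the case $n=2m+1$, and $w \in \{w_h,w_h'\}$ in the case $n=2m$, with the parity condition defining $W_m^D \subset W_m^C$ selecting which of the two. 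For the supersingular case $h=\infty$, the F-zip degenerates so that $U_0 \subset U_1^c$, and Ogus's crystalline description of $\sigma_0$ via the discriminant of the N\'eron--Severi lattice inside the K3 crystal translates to the analogous condition on the iterated position of $U_0^c$ within $U_\bullet$, yielding $w=w_{2m+1-\sigma_0}$ or its twist when $\sigma_0<n/2$, and the central element $w_{m+1}$ (which admits no twist) in the superspecial case $\sigma_0=n/2$.

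The main obstacle I expect lies in the even-dimensional case $n=2m$, where distinguishing $w_k$ from its twist $w_k'$ requires carefully tracking the ${\rm SO}(2m)$-orbit of the flag through the action of $s_m'$. The two maximal isotropic subspaces containing a fixed codimension-one isotropic subspace lie in different ${\rm SO}(2m)$-orbits, and one must verify that the choice determined by the F-zip matches the classical invariant on the nose, not merely modulo the ${\rm O}/{\rm SO}$ ambiguity. A secondary technical point is showing that $w$ is constant on families---needed for the claim that all final filtrations have the same type---which amounts to a flatness property of the strata on the flag bundle over ${\mathcal F}_g$ and a specialization argument for the F-zip invariants.
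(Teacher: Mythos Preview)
The paper does not contain a proof of this theorem. The sentence immediately preceding the statement reads ``The following theorem is proven in \cite{E-vdG2}'', and the paragraph following it offers only commentary on distinguishing the twisted from the untwisted case, together with a pointer to \cite[Section~5]{E-vdG2} for the analysis of why one of the two branch-point elements $w_m$, $w_{m+1}$ is excluded. There is therefore no argument in this paper against which to compare your proposal; the present article is a survey and imports the result wholesale.

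That said, two remarks on your sketch. First, a terminological slip: you refer to the case $\sigma_0=n/2$ as ``superspecial'', but in the paper's conventions (and standardly) superspecial means $\sigma_0=1$, the zero-dimensional stratum; $\sigma_0=n/2$ is the \emph{generic} supersingular case, the largest Artin-invariant stratum. Second, your assertion that $w_{m+1}$ ``admits no twist'' is not accurate: the twisted final elements $w_j'=w_j s_m'$ exist for all $j$, and the substance of parts ii) and iv) is precisely that the geometry of the K3 F-zip selects $w_m'$ (twisted) in the height-$n/2$ case and $w_{m+1}$ (untwisted) in the $\sigma_0=n/2$ case, rather than the a priori equally plausible alternatives $w_m$ and $w_{m+1}'$. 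This selection is the delicate point analyzed in \cite[Section~5]{E-vdG2}, and your sketch does not indicate how you would carry it out.
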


In case i) (resp.\ in case iii)) 
we can distinguish these cases $w=w_h$ or $w=w_h'$ 
(resp.\ $w=w_{n-\sigma_0}$ or $w=w_{n-\sigma_0}'$) 
for even $n$ by looking whether the 
so-called middle part of the cohomology is split, 
or equivalently, by the sign of the permutation $w$. We get $w_h'$ in
case i) exactly if $w$ is an odd permutation and in case 
iii) we get $w=w_{n-\sigma_0}$ exactly if $w$ is an even permutation.

Looking at the diagram above one sees that the theorem excludes one of
the two possibilities corresponding to the two final elements
$w_m$ and $w_{m+1}$ of length $m$.
This is analyzed in detail in \cite[Section 5]{E-vdG2}. 
It then agrees with the fact that the (closed) strata defined by the 
height and the Artin invariant are linearly ordered by inclusion,
whereas the final $w_i$ in the above diagram are not.

\section{Strata on the flag space}
Suppose that we have a family $f: {\mathcal X} \to S$ of 
$N$-marked K3 surfaces over a smooth base $S$. 
We shall make a versality assumption. At a geometric point $s$ of $S$ 
we have the Kodaira-Spencer map
$T_sS \to H^1(X_s,T_{X_s}^1)$. We have a natural map 
$H^1(X_s,T_{X_s}^1)\to 
{\rm Hom}(H^0(X_s,\Omega^2_{X_s}), H^1(X_s,\Omega^1_{X_s}))$ 
and we can project $H^1(X_s,\Omega^1_{X_s})$ 
to the orthogonal complement $P$  of the image of 
$N\hookrightarrow {\rm NS}(X_s)$ 
in $H^1(X_s,\Omega^1_{X_s})$. 
\begin{assumption}\label{versality}
The versality assumption is the requirement that the resulting map 
$$
T_sS \to {\rm Hom}(H^0(X_s,\Omega^2_{X_s}),P)
$$ 
is surjective.
\end{assumption}
The primitive cohomology forms a vector bundle ${\mathcal H}$ 
of rank $n$ over $S$. It comes with two partial orthogonal isotropic 
flags: the conjugate flag and the Hodge flag.
If we choose a complete orthogonal flag refining the conjugate filtration and
transfer it to the Hodge filtration by the Cartier operator we get
two flags and we can measure the relative position. This defines strata on $S$.
This implies that we have to choose a flag and we are thus forced to work
on the flag space ${\mathcal B}$ over $S$ (or ${\mathcal B}_N$ over
${\mathcal F}_N$)
of complete isotropic flags refining the Hodge filtration. 
(Since we are using ${\F}_N$ for the moduli space of $N$-polarized K3 surfaces
 we use another letter for the flag space; say ${\mathcal B}_N$ for banner). 

To define the strata 
scheme-theoretically we consider the general case of a semi-simple Lie group
$G$ and a Borel subgroup $B$ and a $G/B$-bundle $R\to Y$ over some scheme $Y$
with $G$ as structure group. Let  $r_i: Y\to R$ ($i=1,2$) be two sections.
If $w$ is an element of the Weyl group $W$ of $G$ we define a locally closed
subscheme ${\mathcal U}_w$ of $Y$ as follows. We choose locally (possibly
in the \'etale topology) a trivialization of $R$ such that $r_1$ is a constant
section. Then $r_2$ corresponds to a map $Y \to G/B$ and we define
${\mathcal U}_w$ (resp.\ $\overline{\mathcal U}_w$) 
to be the inverse image of the
$B$-orbit $BwB$ (resp.\ of its closure). 

We thus find strata ${\mathcal U}_w$ and $\overline{\mathcal U}_w$ of 
${\mathcal B}_N$; 
it turns out that  $\overline{\mathcal U}_w$ 
is the closure of ${\mathcal U}_w$. 
It might seem that working on the flag space brings us farther
from the goal of defining and studying strata on the base space $S$ or on
the moduli spaces. However, working with the strata on the flag space
has the advantage that the strata are much better behaved there. 

The space ${\mathcal B}_N$ together with the strata ${\mathcal U}_w$ is 
a stratified space. The space ${\mathcal Fl}_n$ of complete self-dual flags
on an orthogonal space $V$ also carries a stratification, namely
 by Schubert cells.
It is fiber space 
over the space of maximal isotropic subspaces ${\mathcal I}_n$.

The main idea is now that our space ${\mathcal B}_N$ over ${\mathcal F}_N$
locally at a point up to the $(p-1)$st infinitesimal neighborhood looks like ${\mathcal Fl}_n
\to {\mathcal I}_n$
at a suitable point as stratified spaces.
This idea was developed in \cite{E-vdG1} and here it profitably can be used too.

If $(R,m)$ is a local ring {\sl the height $1$ hull} of $R$ 
(resp.\ of $S={\rm Spec}(R)$) is $R/m^{(p)}$ (resp.\ ${\rm Spec}(R/m^{(p)})$)
with $m^{(p)}$ the ideal
generated by the $p$th powers of elements of $m$. It defines the 
height $1$ neigborhood of the point given by $m$. We call two local rings
height $1$-isomorphic if their height $1$ hulls are isomorphic. 

\begin{theorem}
Let $k$ be a perfect field of characteristic $p$. For each $k$-point $x$ of 
${\mathcal B}_N$ there exists a $k$-point $y$ of ${\mathcal Fl}_n$ such that
the height $1$ neighborhood of $x$ is isomorphic (as stratified spaces) 
to the height $1$ neighborhood of $y$. 
\end{theorem}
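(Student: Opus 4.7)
The plan is to construct a period-type morphism from the height $1$ neighborhood of $x$ into $\mathcal{Fl}_n$, show it is an isomorphism onto the height $1$ neighborhood of an appropriately chosen $y$, and check that this isomorphism respects the stratifications. The central tool is a Frobenius rigidification of the conjugate flag that is available precisely on the height $1$ hull.

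Let $(R,\mathfrak{m})$ be the local ring of $\mathcal{B}_N$ at $x$ and set $R_1=R/\mathfrak{m}^{(p)}$. Since absolute Frobenius sends $\mathfrak{m}$ into $\mathfrak{m}^{(p)}$, the morphism $F\colon\mathrm{Spec}(R_1)\to\mathrm{Spec}(R)$ factors through the residue field. Hence for any coherent sheaf $\mathcal{E}$ on $\mathrm{Spec}(R)$, the pullback $F^{*}\mathcal{E}|_{R_1}$ is canonically identified with $\mathcal{E}_x\otimes_k R_1$. Via the inverse Cartier isomorphism this shows that, over $R_1$, the conjugate filtration on the primitive cohomology $\mathcal{H}$ is the constant filtration inside $H:=\mathcal{H}_x\otimes_k R_1$; extending the refinement $D^{\bullet}$ at $x$ yields a canonical constant complete isotropic flag on $H$. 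After the induced trivialization $\mathcal{H}|_{R_1}\simeq H$, the Hodge filtration together with the tautological refinement provided by the flag bundle structure of $\mathcal{B}_N$ defines a second, genuinely varying, complete isotropic flag in $H$. Recording this second flag as a point of the orthogonal flag variety yields
$$
\varphi\colon \mathrm{Spec}(R_1)\longrightarrow \mathcal{Fl}_n,
$$
and we set $y:=\varphi(\text{closed point})$.

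The next step is to verify that $\varphi$ induces an isomorphism on Zariski tangent spaces. The tangent space to $\mathcal{Fl}_n$ at $y$ decomposes into a period-domain block corresponding to moving the line $U_0\subset U_0^{\perp}$ (given by $\mathrm{Hom}(U_0,U_0^{\perp}/U_0)$), plus a refinement block tangent to the full isotropic flag variety of $\mathrm{SO}(U_0^{\perp}/U_0)$. On the source, the refinement block is matched tautologically by the flag-bundle structure of $\mathcal{B}_N\to\mathcal{F}_N$, while the period-domain block is controlled by the composition of Kodaira-Spencer with the projection to the primitive part, which is surjective by Assumption \ref{versality}. A dimension count shows that $\mathcal{B}_N$ and $\mathcal{Fl}_n$ are smooth of the same dimension, so the tangent map is an isomorphism. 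Since both local rings are regular, a local homomorphism inducing an isomorphism on $\mathfrak{m}/\mathfrak{m}^2$ is an isomorphism of complete local rings, and therefore $\varphi$ induces an isomorphism on height $1$ hulls.

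The stratifications then match tautologically: on $\mathcal{B}_N$ the stratum $\mathcal{U}_w$ is cut out by the condition that the Cartier-transported conjugate flag and the Hodge flag refinement are in relative position $w$, while on $\mathcal{Fl}_n$ the Schubert stratum labeled by $w$ is cut out by the same Bruhat condition with respect to the fixed reference flag at $y$, and $\varphi$ was built to send the first pair to the second. The main obstacle is the careful verification of surjectivity of the tangent map, which requires bookkeeping of the refinement directions against the Kodaira-Spencer contribution coming from Assumption \ref{versality}. A secondary technical point arises for even $n$: one must ensure that the rigidified conjugate flag over $R_1$ lies in the intended $\mathrm{SO}(n)$-component of the isotropic flag variety, so that the stratum is labeled by $w$ and not by its twist $w s_m'$.
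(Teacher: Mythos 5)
Your overall architecture is the same as the paper's: rigidify one of the two flags over the height~$1$ hull, read off the other flag as a map to $\mathcal{Fl}_n$, and use versality to see that this map is an isomorphism onto the height~$1$ neighborhood of its image. However, there is a genuine gap at the crucial first step, the construction of the trivialization $\mathcal{H}|_{R_1}\simeq H$. The inverse Cartier isomorphism only gives $F^*(U_i/U_{i-1})\cong U^c_{2-i}/U^c_{1-i}$, i.e.\ it identifies the \emph{graded pieces} of the conjugate filtration with Frobenius pullbacks (which are indeed canonically constant over $R_1$, as you correctly observe). It does not identify the conjugate filtration itself with a constant filtration inside a constant bundle: a filtration whose graded pieces are constant need not be constant, and more fundamentally the phrase ``constant subbundle of $\mathcal{H}|_{R_1}$'' has no meaning until the ambient bundle $\mathcal{H}|_{R_1}$ has been trivialized --- which is precisely what your argument is supposed to produce and never does. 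The words ``after the induced trivialization $\mathcal{H}|_{R_1}\simeq H$'' therefore beg the question.

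The missing ingredient is the one the paper actually uses: the ideal $\mathfrak{m}/\mathfrak{m}^{(p)}$ carries a divided power structure with $\gamma_n=0$ for $n\geq p$, so $\mathrm{Spec}(k)\hookrightarrow\mathrm{Spec}(R_1)$ is a nilpotent PD-thickening and the crystal structure of the de Rham cohomology (equivalently, integrating the Gauss--Manin connection, which is possible on such a thickening) gives a \emph{canonical} isomorphism $\mathcal{H}|_{R_1}\cong \mathcal{H}_x\otimes_k R_1$. One must then still verify that the conjugate filtration is horizontal, i.e.\ constant in this trivialization; this follows because it is generated by the image of the (horizontal) crystalline Frobenius applied to $F^*\mathcal{H}|_{R_1}$, which is constant by your own observation. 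With that trivialization in hand, the rest of your argument (the derivative of $\varphi$ on the ``period block'' being the Kodaira--Spencer map, hence surjective by Assumption~\ref{versality}; the Nakayama argument identifying the two height~$1$ hulls; the tautological matching of the $B$-orbit stratifications, with the caveat you note about the two $\mathrm{SO}(n)$-components when $n$ is even) is sound. Note also that if you attempt to sidestep Gauss--Manin by choosing an arbitrary trivialization adapted to the conjugate flag (such trivializations exist since the relevant Borel torsor over the Artinian local ring $R_1$ is trivial), the map $\varphi$ becomes non-canonical and its derivative changes by a $\mathfrak{b}$-valued gauge term, so your identification of the tangent map with Kodaira--Spencer would no longer be justified.
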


Indeed, we can trivialize the de Rham cohomology with its Gauss-Manin 
connection on the height $1$ neighborhood of $x$ (because the ideal of $x$
has a divided power structure for which divided powers of degree $\geq p$
are zero). This has strong consequences for our strata, cf.\ the 
following result from \cite{E-vdG2}.

\begin{theorem}\label{height1structure}
The strata ${\mathcal U}_w$ on the flag space ${\mathcal B}_N$ satisfy the following properties:
\begin{enumerate}
\item{} The stratum ${\mathcal U}_w$ is smooth of dimension equal to the 
length $\ell(w)$ of~$w$.
\item{} The closed stratum $\overline{\mathcal U}_w$ is reduced, Cohen-Macaulay and normal of dimension $\ell(w)$ and equals the closure of ${\mathcal U}_w$.
\item{} If $w$ is a final element then the restriction of ${\mathcal B}_N
\to {\mathcal F}_N$ to ${\mathcal U}_w$ is a finite surjective \'etale
covering from ${\mathcal U}_w$ to its image ${\mathcal V}_w$.
\end{enumerate}
\end{theorem}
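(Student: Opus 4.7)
The plan is to transfer each property from the corresponding classical statement about Schubert cells in the orthogonal flag variety $G/B$ (with $G=\mathrm{SO}(n)$), using the preceding height~$1$ isomorphism theorem as the bridge. Classically one knows that $BwB/B\subset G/B$ is a smooth locally closed affine cell of dimension $\ell(w)$, that its closure $\overline{BwB/B}$ is a reduced, normal, Cohen--Macaulay projective Schubert variety, and that this closure coincides with the topological closure of the open cell.

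For parts~(1) and~(2), I fix a closed point $x\in\overline{\mathcal{U}}_w$ and invoke the preceding theorem to match it with a point $y$ of the corresponding Schubert variety by a stratified isomorphism of height~$1$ neighbourhoods. Reading off the embedding dimension gives $\dim_k\mathfrak{m}_x/\mathfrak{m}_x^2=\ell(w)$ for $x\in\mathcal{U}_w$, and combined with the scheme-theoretic upper bound $\dim\overline{\mathcal{U}}_w\le\ell(w)$ coming from the definition as a pullback of a $B$-orbit closure, this forces $\mathcal{U}_w$ to be smooth of dimension $\ell(w)$. The local properties required in~(2)---reducedness, Cohen--Macaulayness, normality, and the identification of scheme-theoretic with topological closure---are then transferred from the classical Schubert case via the same local identification, enhanced by the Gauss--Manin trivialization that underlies the height~$1$ theorem.

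For part~(3), let $w$ be final. By Definition~2.1 and Theorem~2.2, the final condition forces the refinement $D^\bullet\cap U_i+U_{i-1}$ of the Hodge filtration to agree (up to twist) with the Cartier transport of $D^\bullet$. Hence over a point $[X]\in\mathcal{V}_w$ the fibre of $\mathcal{B}_N\to\mathcal{F}_N$ that meets $\mathcal{U}_w$ is cut out by a rigid compatibility between the two flags, giving only a discrete, finite set of final flags. The versality Assumption~\ref{versality} makes this finite fibre reduced and ensures $\dim\mathcal{V}_w=\dim\mathcal{U}_w$. The restricted map is then quasi-finite of maximal rank between smooth equidimensional schemes, hence \'etale, and proper, hence finite.

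The main obstacle is the transfer in part~(2): a height~$1$ isomorphism of stratified spaces gives, \emph{a priori}, only information modulo $\mathfrak{m}^{(p)}$, whereas Cohen--Macaulayness and normality are genuine properties of the completion. The resolution should be that in the rank range relevant to K3 surfaces ($n\le 21$) and for $p\ge 3$, the Schubert-variety defining equations at a given point have bounded complexity, so their height~$1$ truncations determine them, and the classical local properties pull back to $\overline{\mathcal{U}}_w$.
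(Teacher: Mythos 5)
First, a remark on the comparison itself: this paper does not prove the theorem --- it is quoted from [E-vdG2], and only the preceding height-$1$ local model (Theorem 3.1) is sketched here. Your overall strategy --- model the strata locally on Schubert varieties via that height-$1$ isomorphism and import the classical facts that Schubert varieties are reduced, normal and Cohen--Macaulay --- is exactly the intended one. But the step you yourself single out as the main obstacle is not resolved by your proposed fix, and this is a genuine gap. An isomorphism of height-$1$ hulls $R/\mathfrak{m}^{(p)}\cong S/\mathfrak{n}^{(p)}$ does not allow one to read off reducedness, normality or Cohen--Macaulayness of $R$ from those of $S$: for instance $k[[x,y]]/(x^2)$ and $k[[x,y]]/(x^2-y^{2p})$ have identical height-$1$ hulls and the same dimension, yet one is non-reduced and the other reduced. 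No bound on the rank $n\le 21$ or on the ``complexity of the defining equations'' changes this, since the loss of information already occurs in two variables and $p$ may be as small as $3$; the claim that height-$1$ truncations ``determine'' the equations is simply false. Closing this gap requires genuinely more input (this is the content of [E-vdG1, \S 8], on which [E-vdG2] relies): one first pins down the dimension of every component of $\overline{\mathcal U}_w$ --- the lower bound $\dim\ge\ell(w)$ coming from the scheme-theoretic definition as a pullback of the Schubert ideal (a Krull-type bound via the graph construction), the upper bound from the height-$1$ data --- and only then compares the two local rings by an argument that uses more than the bare isomorphism of hulls.

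Two further slips. In part (1) your inequalities point the wrong way: the pullback description gives the \emph{lower} bound $\dim\overline{\mathcal U}_w\ge\ell(w)$ on every component, while the height-$1$ identification gives embedding dimension $\ell(w)$ and hence the \emph{upper} bound; regularity follows from the coincidence of the two, whereas as written you have only two upper bounds, which would be consistent with a fat point. In part (3), finiteness of $\mathcal U_w\to\mathcal V_w$ does not follow from properness of $\mathcal B_N\to\mathcal F_N$, because $\mathcal U_w$ is only locally closed; one must show that the fibre of the \emph{closed} stratum $\overline{\mathcal U}_w$ over a point of $\mathcal V_w$ lies entirely in the open stratum $\mathcal U_w$, i.e.\ one must control where the boundary strata project. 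That is exactly the analysis of non-final strata carried out later (Lemma 4.1 and [E-vdG2, \S 9--11]) and cannot be waved through at this stage.
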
 

The degrees of the maps $\pi_w: {\mathcal U}_w \to {\mathcal V}_w$ for final $w$
coincide with the number of final filtrations of type $w$ and these numbers
can be calculated explicitly. For example,
for $w_i\in W_m^B$ with $1 \leq i <m $ we have
$$
\deg \pi_{w_i}/\deg \pi_{w_{i+1}} = p^{2m-2i-1}+p^{2m-2i-2}+\ldots + 1.
$$
\section{The Cycle Classes}
We consider a family of $N$-polarized K3 surfaces ${\mathcal X} \to S$
with $S$ smooth and satisfying the versality assumption \ref{versality}.
Our strategy in \cite{E-vdG2} is to apply inductively a Pieri formula
to the final strata, which expresses the intersection 
$\lambda_1 \cdot [\overline{\mathcal U}_w]$ as a sum over the classes 
$[\overline{\mathcal U}_v]$, where  $v$ is running through the elements of the
Weyl group of the form 
$v=ws_{\alpha}$ with $s_{\alpha}$ simple and
$\ell(ws_{\alpha})=\ell(w)-1$. In fact,
we use a Pieri formula due to Pittie and Ram \cite{P-R}.
In general these elements $v$ of colength $1$ are not final and this 
forces us to analyze what happens with the strata ${\mathcal U}_v$
under the projection ${\mathcal B}_N \to {\mathcal F}_N$.
It turns out that for a non-final stratum either the projection is
to a lower-dimensional stratum or factors through an inseparable
map to a final stratum. The degree of these inseparable maps can be calculated.
In the case of a map to a lower dimensional stratum we can neglect these
for the cycle class calculation. 

So suppose that for an
element $w$ in the Weyl group we have $\ell(ws_i)=\ell(w)-1$
for some $1<i\leq m$. 
This means that if $A_{\bullet}$ and $B_{\bullet}$ denote
the two flags, that the image of $B_{w(i+1)}\cap A_{i+1}$ in $A_{i+1}/A_i$
is $1$-dimensional and thus we can change the flag $A_{\bullet}$ to a flag $A'_{\bullet}$ by setting 
$A'_j=A_j$ for $j\neq i$ and $A_i'/A_{i-1}$ equal to the image of 
$B_{w(i+1)}\cap A_{i+1}$. This gives us a map
$$
\sigma_{w,i} : {\mathcal U}_w \to {\mathcal F}_N, \qquad
(A_{\bullet}, B_{\bullet}) \mapsto (A'_{\bullet},B_{\bullet}).
$$
In this situation the image depends on the length $\ell(s_iws_i)$:

\begin{lemma}
If $\ell(s_iws_i)=\ell(w)$ then the image of $\sigma_{w,i}$ is equal to ${\mathcal U}_{s_iws_i}$ and the map is purely inseparable of degree $p$. If
$\ell(s_iws_i)=\ell(w)-2$ then $\sigma_{w,i}$ maps onto ${\mathcal U}_{ws_i}
\cup {\mathcal U}_{s_iws_i}$ and the map $\sigma_{w,i}$ is not generically finite.
\end{lemma}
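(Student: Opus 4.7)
My plan is to reduce the statement to a local computation on the model flag variety via the theorem on height-$1$ neighborhoods stated just before Theorem \ref{height1structure}, and then combine a combinatorial analysis of the incidence of two flags with an explicit coordinate calculation on the height-$1$ hull. By that theorem, each geometric point $x\in{\mathcal U}_w$ has a height-$1$ neighborhood isomorphic, as a stratified scheme, to that of a point in the model flag variety ${\mathcal Fl}_n$ over ${\mathcal I}_n$. Since the definition of $\sigma_{w,i}$ uses only the pair of flags on the primitive cohomology, it transfers to the corresponding ``swap at level $i$'' operation on pairs of complete isotropic flags in the model. I would first compute the image of $\sigma_{w,i}$ there and then pull the identification back to ${\mathcal B}_N$ via the stratified isomorphism.

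In the model, with $(A_\bullet, B_\bullet)$ in relative position $w$, the modified flag $A'_\bullet$ is in relative position $s_iws_i$ with $B_\bullet$ whenever the incidence pattern does not drop; this is a direct check using the formula $\dim(A_j\cap B_k) = \#\{\ell\leq j : w(\ell)\leq k\}$. When $\ell(s_iws_i)=\ell(w)$, the pattern does not drop on ${\mathcal U}_w$ and the image is exactly ${\mathcal U}_{s_iws_i}$. When $\ell(s_iws_i)=\ell(w)-2$, the generic image still lies in ${\mathcal U}_{s_iws_i}$, but on a codimension-one sublocus of ${\mathcal U}_w$ the intersection $B_{w(i+1)}\cap A_{i+1}$ degenerates so that $A'_\bullet$ acquires position $ws_i$, accounting for the second image component.

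For the degree in Case 1, both source and target have dimension $\ell(w)$ by Theorem \ref{height1structure}, so $\sigma_{w,i}$ is generically finite. To see that it is purely inseparable of degree $p$, I would choose local coordinates on the height-$1$ hull adapted to the two flags: one parameter $t$ on the pencil of lines between $A_{i-1}$ and $A_{i+1}$, together with coordinates on the remainder of ${\mathcal U}_w$. The inverse Cartier isomorphism $F^*(U_j/U_{j-1})\cong U^c_{2-j}/U^c_{1-j}$ ties the conjugate-refined flag to the Hodge-refined flag through a Frobenius, forcing the $t$-parameter on the target to appear only through its $p$-th power, so on local rings $\sigma_{w,i}$ is the identity in all variables but $t$ and is given by $t\mapsto t^p$ in that one. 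In Case 2, the image strata have dimensions $\ell(w)-2$ and $\ell(w)-1$, both strictly smaller than $\dim{\mathcal U}_w$, so the map has positive-dimensional fibers and is not generically finite.

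The main obstacle is the inseparable degree in Case 1: the map $\sigma_{w,i}$ is constructed without any explicit Frobenius, so the factor of $p$ must be extracted from the Frobenius-twisted identification of the Hodge and conjugate filtrations on the primitive cohomology. The delicate point is to make the coordinate choice on the height-$1$ hull concrete enough that the Cartier isomorphism visibly contributes the $p$-th power in the correct variable; this requires more than the combinatorial identification of strata and is where the specific geometry of the conjugate filtration enters essentially.
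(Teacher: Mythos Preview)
The paper does not contain a proof of this lemma; it is a survey, and the sentence immediately following the lemma explicitly defers the argument to \cite[Sections~9--11]{E-vdG2} (the abelian-variety analogue is \cite[Prop.~8.4]{E-vdG1}). There is therefore no in-paper proof to compare your proposal against.

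Your outline is close in spirit to what is done in \cite{E-vdG2}: the image strata are identified by a relative-position computation, the failure of generic finiteness in the second case follows from the dimension count you give, and the inseparability in the first case is traced to the Cartier/Frobenius link between the two flags. One point deserves care, however. You propose to compute the degree by transferring $\sigma_{w,i}$ to the model via the height-$1$ isomorphism and reading off ``$t\mapsto t^p$'' there. But a purely inseparable degree-$p$ map collapses the height-$1$ neighbourhood of a source point to the closed point of the target (already for $t\mapsto t^p$ on $k[t]/(t^p)$), so the height-$1$ hull alone cannot detect the degree; and on the model ${\mathcal Fl}_n$ the reference flag is genuinely constant, so the naive ``swap at level $i$'' there is separable. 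What the height-$1$ picture does buy you is the identification of the image stratum and the computation of the differential of $\sigma_{w,i}$: the $A_i$-direction lies in the kernel because $A_i$ influences the target only through a step of $B_\bullet$, which is obtained from $A_\bullet$ by Frobenius pullback. Combined with bijectivity on geometric points (a combinatorial check), this gives a finite radicial map between smooth varieties of the same dimension. Pinning the degree at exactly $p$ then requires working with the actual flag coordinates on ${\mathcal B}_N$, not on the model---precisely the step you flag as the main obstacle and do not carry out.

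A minor point on your Case~2 description: you assert that the generic image lies in ${\mathcal U}_{s_iws_i}$ and a codimension-one locus goes to ${\mathcal U}_{ws_i}$, but note that $\dim {\mathcal U}_{ws_i}=\ell(w)-1>\ell(w)-2=\dim{\mathcal U}_{s_iws_i}$, so this picture (the ``generic'' piece of the image being the smaller stratum) should be re-examined. It does not affect the conclusion, since either way the image has dimension strictly less than $\ell(w)$.
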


We then analyze in detail the colength $1$ elements occuring and whether they give rise to projections that loose dimension or are inseparable to final strata.
This is carried out in detail in \cite[Section 9-11]{E-vdG2}.
In this way the Pieri formula enables us to calculate the cycle classes.

The result for the cycle classes of the strata $\overline{\mathcal V}_{w_i}$
in the case that $n$ is odd (and with $m=[n/2]$) reads (cf.\ \cite{E-vdG2}):
\begin{theorem}\label{Oddcycleclasses}
  The cycle classes of the final strata $\Vc_w$ on the base $S$ are polynomials
  in $\lambda_1$ with coefficients that are polynomials in $\frac{1}{2}\ZZ[p]$
  given by
\begin{eqnarray*}
{\rm i)} \quad
[\Vc_{w_k}] &=& (p-1)(p^2-1)\cdots(p^{k-1}-1) \lambda_1^{k-1} \quad
\hbox{\rm if $1\leq k\leq m$,}\\
{\rm ii)} \quad [\Vc_{w_{m+1}}] &=&\frac{1}{2} (p-1)(p^2-1)\cdots(p^{m}-1)
\lambda_1^{m},\\
{\rm iii)} \quad
[\Vc_{w_{m+k}}] &=&\frac{1}{2}
\frac{(p^{2k}-1)(p^{2(k+1)}-1)\cdots(p^{2m}-1)}{(p+1)\cdots(p^{m-k+1}+1)}
\lambda_1^{m+k-1} \quad
\hbox{\rm if $2\leq k\leq m$.}
\end{eqnarray*}
\end{theorem}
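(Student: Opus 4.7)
My plan is to compute the classes upstairs on the flag space $\mathcal{B}_N$, where by Theorem~\ref{height1structure} the strata are smooth, normal, Cohen--Macaulay, and \'etale over their images under the projection $\pi:\mathcal{B}_N\to S$ for final~$w$, and then push the classes down to~$S$. The induction starts at the top: the longest final element $w_1$ has $\overline{\mathcal U}_{w_1}=\mathcal{B}_N$, and since $\pi_{w_1}$ is finite \'etale, $[\overline{\mathcal V}_{w_1}]=1$, matching formula~(i) for $k=1$.

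The inductive engine is the Pittie--Ram Pieri formula~\cite{P-R}. On $\mathcal{B}_N$ the Hodge class equals $c_1(U_0)$, so intersection with $\lambda_1$ expands as
$$
\lambda_1\cdot[\overline{\mathcal U}_w]=\sum_{\alpha}c_{w,\alpha}[\overline{\mathcal U}_{ws_\alpha}],
$$
the sum ranging over simple reflections $s_\alpha$ with $\ell(ws_\alpha)=\ell(w)-1$ and coefficients $c_{w,\alpha}$ that are explicit polynomials in~$p$ attached to the type-$B_m$ root data. Iterating this downward from $w_1$ produces cycle class identities on $\mathcal{B}_N$ that involve many classes $[\overline{\mathcal U}_v]$ indexed by non-final~$v$.

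To descend to~$S$, I invoke the lemma on the maps $\sigma_{w,i}$: each non-final term either projects to a stratum of strictly smaller dimension than $\overline{\mathcal V}_v$ --- contributing zero in the relevant codimension --- or factors through a purely inseparable map of degree~$p$ onto some final $\mathcal{U}_{s_iws_i}$. Combining this with the \'etale degrees $\deg(\pi_{w_i})/\deg(\pi_{w_{i+1}})=p^{2m-2i-1}+\cdots+1$ collapses the Pieri expansion to a single-term recursion
$$
\lambda_1\cdot[\overline{\mathcal V}_{w_k}]=a_k\,[\overline{\mathcal V}_{w_{k+1}}]
$$
on the base, with $a_k$ an explicit polynomial in $\tfrac12\mathbb{Z}[p]$. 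Solving this recursion from $[\overline{\mathcal V}_{w_1}]=1$ gives formulas~(i); continuing past the supersingular stratum $w_{m+1}$ gives~(ii) and~(iii), with the factor $\tfrac12$ in (ii)--(iii) arising from an extra order-two symmetry of the pushforward at and beyond $w_{m+1}$ (two stable refinements coalesce to one final stratum below).

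The main obstacle is the combinatorial bookkeeping in the descent step: for each $w$ one must identify which colength-one elements $v=ws_\alpha$ in the Pieri expansion are non-final, decide via the $\sigma_{w,i}$ lemma (using the dichotomy $\ell(s_iws_i)=\ell(w)$ versus $\ell(w)-2$) which of them drop dimension and which are inseparable of degree~$p$ onto a final stratum, and track exactly which final stratum is hit with which inseparable degree. Ensuring all these contributions combine into the clean product in~(iii) --- with numerator $(p^{2k}-1)\cdots(p^{2m}-1)$ and denominator $(p+1)\cdots(p^{m-k+1}+1)$ --- uniformly across the height range $1\le k\le m$, the supersingular stratum $w_{m+1}$, and the Artin-invariant range $k\ge 2$ is the delicate part; it is the type-$B$ analogue of the computation carried out for type~$C$ abelian varieties in~\cite{E-vdG1}.
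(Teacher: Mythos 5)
Your proposal follows essentially the same route the paper takes (and defers to \cite{E-vdG2} for): the Pittie--Ram Pieri formula applied inductively on the flag space ${\mathcal B}_N$, combined with the analysis of the maps $\sigma_{w,i}$ to discard dimension-dropping non-final terms and account for the purely inseparable degree-$p$ contributions, then pushing down via the \'etale degrees of $\pi_w$. The combinatorial bookkeeping you correctly identify as the main remaining work is exactly what is carried out in Sections 9--11 of \cite{E-vdG2}.
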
 

In the case where $n$ is even the result for the untwisted final 
elements is the following:

\begin{theorem}\label{Untwistedclasses}
The cycle classes of the final strata $\Vc_w$ for final
elements $w=w_j \in W_m^D$ on the base $S$ are
powers of $\lambda_1$ times polynomials
in $\frac{1}{2}\ZZ[p]$ given by
\begin{eqnarray*}
{\rm i)} \quad
[\Vc_{w_k}] &=& (p-1)(p^2-1)\cdots(p^{k-1}-1) \lambda_1^{k-1} \quad
\hbox{\rm if $k\leq m-1$, } \\
{\rm ii)} \quad [\Vc_{w_{m+1}}]
&=& (p-1)(p^2-1)\cdots(p^{m-1}-1)
\lambda_1^{m-1},\\
{\rm iii)} \quad
[\Vc_{w_{m+k}}] &=&
\frac{1}{2}
\frac{\prod_{i=1}^{m-1}(p^i-1)\prod_{i=m-k+2}^m(p^i+1)}
{\prod_{i=1}^{k-2}(p^i+1)\prod_{i=1}^{k-1}(p^i-1)}
\lambda_1^{m+k-2} \quad
\hbox{\rm if $2\leq k\leq m$. }
\end{eqnarray*}
Furthermore, we have that $\Vc_{w_m}=\emptyset$.
\end{theorem}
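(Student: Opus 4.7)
The plan is to parallel the proof of Theorem \ref{Oddcycleclasses} but with Weyl group $W_m^D$ in place of $W_m^B$. I would first compute the classes $[\overline{\mathcal U}_w]$ on the flag bundle $\mathcal B_N$ by descending induction on the length $\ell(w)$, starting from $\overline{\mathcal U}_{w_1}=\mathcal B_N$ with class $1$, and then descend to $\mathcal F_N$ via Theorem \ref{height1structure}(3) by dividing by the \'etale degree of $\pi_w : \mathcal U_w \to \mathcal V_w$, which is computed from a $D$-type analogue of the ratio $\deg\pi_{w_i}/\deg\pi_{w_{i+1}} = p^{2m-2i-1}+\cdots+1$.

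For the inductive step I would apply the Pittie--Ram Pieri formula \cite{P-R} in type $D$ to expand $\lambda_1 \cdot [\overline{\mathcal U}_w]$ as a positive combination $\sum c_\alpha [\overline{\mathcal U}_{ws_\alpha}]$ over simple reflections with $\ell(ws_\alpha)=\ell(w)-1$. For coatoms $v=ws_\alpha$ which are again final, the term feeds directly into the recursion; for non-final $v$, the Lemma on the map $\sigma_{v,i}$ tells us that either $\sigma_{v,i}$ fails to be generically finite (and the pushforward of $[\overline{\mathcal U}_v]$ to $\mathcal F_N$ sits in lower dimension and may be discarded), or $\sigma_{v,i}$ is purely inseparable of degree $p$ onto a final stratum $\mathcal U_{s_ivs_i}$, so that a factor $p$ is picked up when replacing $[\overline{\mathcal U}_v]$ by $[\overline{\mathcal U}_{s_ivs_i}]$. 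Telescoping these Pieri coefficients against the \'etale degrees yields the products $(p-1)(p^2-1)\cdots$ in i) and ii) and the quotient in iii).

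The main obstacle lies in the $D$-type complications at the middle lengths. Unlike type $B$, the final elements are not linearly ordered: the diagram in Section~2 shows that $w_m$ and $w_{m+1}$ both have length $m-1$. I would establish $\overline{\mathcal V}_{w_m}=\emptyset$ from the theorem of Section~2 combined with the parity criterion (sign of $w$ as a permutation) stated right after it: the case-iii) configurations that actually occur have $w=w_{2m+1-\sigma_0}$ an even permutation, which rules out $w_m$ as the type of a final filtration coming from a K3 surface in the family, so the image of $\mathcal U_{w_m}$ in $\mathcal F_N$ is empty. The second subtlety is that the Pieri branching at the fork simple roots $s_{m-1},s_m$ produces, starting from $w_{m-1}$, two coatoms: one of type $w_m$ whose contribution vanishes by the above, and one of type $w_{m+1}$ which carries the full class. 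This asymmetry is what makes formula ii) lack the factor $\tfrac12$ present in its $B$-type analogue and what forces the extra factors $(p^i+1)$ in the denominator of iii). Once these middle-length base cases are pinned down and the relevant insep degrees for the non-final $v$ just below the fork are computed as in \cite[Sections 9--11]{E-vdG2}, the induction runs formally as in the odd-rank case and yields the stated closed forms.
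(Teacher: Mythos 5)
Your outline follows essentially the same route as the paper, which itself only sketches the argument and defers the actual computations to \cite[Sections 9--11]{E-vdG2}: the Pittie--Ram Pieri recursion on the flag space, the dichotomy of the lemma on the maps $\sigma_{w,i}$ for non-final coatoms (dimension drop versus purely inseparable of degree $p$ onto a final stratum), descent to ${\mathcal F}_N$ via the \'etale degrees of $\pi_w$, and the deduction of $\Vc_{w_m}=\emptyset$ from the classification of final filtration types together with the parity criterion are exactly the ingredients used there, and your diagnosis of the fork at $w_m,w_{m+1}$ as the source of the missing factor $\frac12$ in ii) is the right one. One correction: the seed of the induction is not ``$\overline{\mathcal U}_{w_1}={\mathcal B}_N$ with class $1$'' --- $w_1$ is merely the longest \emph{final} element, of length $2m-1$, while the dense stratum of ${\mathcal B}_N$ corresponds to the longest element of the full Weyl group, so $\overline{\mathcal U}_{w_1}$ is a proper subvariety of the flag space; the fact you actually need (and which holds) is that ${\mathcal V}_{w_1}$ is the dense ordinary locus of ${\mathcal F}_N$, so that $[\Vc_{w_1}]=1$ in the Chow ring of ${\mathcal F}_N$, which correctly starts the recursion.
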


Finally in the twisted even case we have:

\begin{theorem}\label{Twistedclasses}
The cycle classes of the final strata $\Vc_w$ for twisted final elements
$w=w_j \in W_m^D s_m^{\prime}$ on the base $S$ are
powers in $\lambda_1$ with coefficients that are polynomials
in $\frac{1}{2}\ZZ[p]$ given by
\begin{eqnarray*}
{\rm i)} \quad
[\Vc_{w_k}] &=& (p-1)(p^2-1)\cdots(p^{k-1}-1) \lambda_1^{k-1} \quad
\hbox{\rm if $k\leq m-1$, } \\
{\rm ii)} \quad [\Vc_{w_m}]
&=& (p-1)(p^2-1)\cdots(p^{m}-1)
\lambda_1^{m-1},\\
{\rm iii)} \quad
[\Vc_{w_{m+k}}] &=&
\frac{1}{2}
\frac{\prod_{i=1}^{m}(p^i-1)\prod_{i=m-k+2}^{m-1}(p^i+1)}
{\prod_{i=1}^{k-1}(p^i+1)\prod_{i=1}^{k-2}(p^i-1)}
\lambda_1^{m+k-2} \quad
\hbox{\rm if $2\leq k\leq m$.}
\end{eqnarray*}
Furthermore, we have $\Vc_{w_{m+1}}=\emptyset$.
\end{theorem}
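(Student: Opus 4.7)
\emph{Proof proposal.} The plan is to mirror the strategy that produced Theorems \ref{Oddcycleclasses} and \ref{Untwistedclasses}: lift the calculation to the flag space ${\mathcal B}_N$, apply the Pittie--Ram Pieri formula to express $\lambda_1 \cdot [\overline{\mathcal U}_w]$ as a sum of classes of colength-one Schubert subvarieties, analyze each such colength-one element via the Lemma on $\sigma_{w,i}$, and push the result down to ${\mathcal F}_N$ through the \'etale covers $\pi_w$ whose degrees are recorded in Theorem \ref{height1structure}. The novelty here is that we work with twisted final elements $w \in W_m^D s_m'$; the Pieri expansion nevertheless lives inside the ambient $W_m^C = W_m^D \sqcup W_m^D s_m'$, so the same machinery as in \cite[\S 9--11]{E-vdG2} applies, provided the twist $s_m'$ is carried through each step with care.

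First I would handle the range $k \le m-1$. The longest twisted element $w_1'$ has as stratum closure the whole flag space (up to the twist component), giving the base case $[\overline{\mathcal U}_{w_1'}] = 1$. The successive Pieri steps involve only simple reflections $s_i$ with $1<i<m$, which act identically on $W_m^D$ and on its coset, so the inseparable degrees and the colength-one analysis are literally the same as in the untwisted setting. This yields part (i). The crucial transition is through lengths $m-1$, $m$ and $m+1$, where the branching in the diagram of final elements occurs: the reflection $s_m$ produces colength-one elements in the \emph{other} branch, and the parity condition defining $W_m^D$ forces exactly one of the two length-$m$ candidates to survive. For twisted elements the surviving candidate is $w_m'$, so we obtain formula (ii) with its extra $(p^m-1)$ factor and the vanishing $\overline{\mathcal V}_{w_{m+1}'}=\emptyset$.

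For (iii), the iteration continues over the supersingular locus. At each step from $k$ to $k+1$, the Pieri formula contributes a root-dependent factor in the numerator while the degree ratio $\deg \pi_{w_{m+k-1}'} / \deg \pi_{w_{m+k}'}$ read off from Theorem \ref{height1structure} introduces a factor in the denominator; compared to the untwisted case, one $(p^i-1)$ and one $(p^i+1)$ trade places at each step, producing the asserted product. The prefactor $\tfrac{1}{2}$ reflects, as in Theorem \ref{Untwistedclasses}, the two choices of twist at the length-$m$ step. The main obstacle I expect is the combinatorial bookkeeping through this length-$m$ transition: one has to verify that the twisted element produced by each Pieri step has the correct parity to lie in $W_m^D s_m'$, and that the inseparable contribution from $\sigma_{w,i}$ carries the correct power of $p$; the degree computations of Theorem \ref{height1structure} must likewise be re-derived in the twisted coset. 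Once these are settled, the telescoping argument of Theorem \ref{Untwistedclasses} goes through with the single structural change that the factor $\prod_{i=1}^{m-1}(p^i-1)$ is replaced throughout by $\prod_{i=1}^{m}(p^i-1)$.
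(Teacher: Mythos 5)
Your overall strategy is the right one, and it is the same one the paper follows: this survey states the theorem without proof, sketches exactly the Pittie--Ram Pieri induction on the flag space together with the colength-one analysis of $\sigma_{w,i}$, and refers to \cite[\S 9--11]{E-vdG2} for the actual computation. The difficulty is that your proposal stops at the level of that sketch. Every quantitative ingredient of the statement --- the specific products of $(p^i-1)$ and $(p^i+1)$, the extra factor $(p^m-1)$ in (ii), the prefactor $\tfrac12$, and the identification of \emph{which} of the two length-$(m-1)$ branches is empty --- is asserted to ``come out'' of computations that you explicitly defer (``once these are settled, the telescoping argument goes through''). Those computations \emph{are} the proof; without the explicit Pieri expansions, the determination of which colength-one strata lose dimension versus map inseparably to final strata, and the \'etale and inseparable degrees in the twisted coset, nothing in parts (ii) and (iii) is established.

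Several of the justifications you do offer are also incorrect or inconsistent. The base case should be $[\Vc_{w_1'}]=1$ in ${\rm CH}(\mathcal{F}_N)$; the class $[\overline{\mathcal U}_{w_1'}]$ in the flag space is not $1$, since $\overline{\mathcal U}_{w_1'}$ has codimension equal to the fibre dimension of $\mathcal{B}_N\to\mathcal{F}_N$. Your claim that the Pieri steps for $k\le m-1$ ``involve only simple reflections $s_i$ with $1<i<m$'' conflicts with the reduced word $w_i=s_is_{i+1}\cdots s_ms_{m-1}\cdots s_1$ quoted in Section 5, which contains both $s_1$ and $s_m$; the colength-one elements of $w_k$ do involve the end reflections, and this is exactly where the untwisted and twisted analyses diverge. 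The emptiness of $\Vc_{w_{m+1}'}$ is not a consequence of ``the parity condition defining $W_m^D$'' --- that condition merely distinguishes the two cosets --- but a geometric fact about which relative positions of the Hodge and conjugate filtrations occur on actual K3 surfaces (Theorem 2.2 and \cite[\S 5]{E-vdG2}). Finally, your two descriptions of how the twisted formula differs from the untwisted one (a $(p^i-1)$ and a $(p^i+1)$ ``trading places at each step'' versus ``the single structural change'' $\prod_{i=1}^{m-1}(p^i-1)\rightsquigarrow\prod_{i=1}^{m}(p^i-1)$) are mutually inconsistent, and neither matches the full set of changes visible in (iii), where the ranges of both the $(p^i+1)$-products in numerator and denominator also shift. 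To turn this into a proof you would need to carry out the colength-one bookkeeping of \cite[\S 9--11]{E-vdG2} in the coset $W_m^Ds_m'$ explicitly.
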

\section{Irreducibility}
In this section we shall show that about half of the $2m$ 
strata ${\mathcal V}_{w_i}$ on our moduli space ${\mathcal F}_N$ of 
$N$-polarized K3 surfaces are irreducible ($m$ strata in the B-case, $m-1$
in the D-case).

\begin{theorem} Let $p\geq 3$ and assume that ${\mathcal F}_N$
is the moduli space of primitively $N$-polarized K3 surfaces where 
$N^{\vee}/N$ has order prime to $p$.
If $w \in W_m^B$ (resp. $w \in W^D_{m}$ or $w \in W_m^Ds^{\prime}_m$) is a (twisted) final element with 
length $\ell(w)\geq m$, then the locus $\overline{\mathcal V}_{w}$
in ${\mathcal F}_N$ is irreducible.
\end{theorem}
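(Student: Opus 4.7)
The plan is to transfer the irreducibility statement from $\F_N$ to the flag space $\mathcal B_N$, reduce it to connectedness by the normality result of Theorem~\ref{height1structure}(2), and then close by descending induction driven by the ampleness of $\lambda_1$ and the cycle-class formulas of Section~4.

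Since $\pi:\mathcal B_N\to\F_N$ is proper and the restriction $\mathcal U_w\to\mathcal V_w$ is finite surjective by Theorem~\ref{height1structure}(3), the image $\Vc_w=\pi(\overline{\mathcal U}_w)$ is irreducible whenever $\overline{\mathcal U}_w$ is. Moreover, by Theorem~\ref{height1structure}(2), $\overline{\mathcal U}_w$ is normal, and on a normal scheme irreducibility is equivalent to connectedness. The local structure result identifying the height-$1$ neighborhood of any point of $\mathcal B_N$ with that of a point of the orthogonal flag variety ${\mathcal Fl}_n$ guarantees in addition that $\overline{\mathcal U}_w$ is unibranch at every point, since Schubert varieties are so. Hence the problem reduces to showing that $\overline{\mathcal U}_w$ is connected.

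I proceed by descending induction on $\ell(w)$, starting from the longest final element $w_1$, for which $\mathcal U_{w_1}$ is open dense in (the appropriate component of) $\mathcal B_N$; its connectedness is inherited from irreducibility of $\F_N$ and connectedness of the orthogonal flag-variety fibres of $\mathcal B_N\to\F_N$. For the inductive step, assume $\overline{\mathcal U}_{w_{k-1}}$ is irreducible with $\ell(w_{k-1})\ge m+1$. Because the final elements of length $\ge m$ lie on the linearly ordered part of the diagram from Section~2, the only final $v$ with $\ell(v)=\ell(w_{k-1})-1$ and $v\le w_{k-1}$ in the Bruhat order is $v=w_k$, so $\overline{\mathcal U}_{w_k}$ is an effective Cartier divisor in $\overline{\mathcal U}_{w_{k-1}}$. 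Projecting to $\F_N$ and invoking the cycle-class computations of Theorems~\ref{Oddcycleclasses}--\ref{Twistedclasses}, the class of $\Vc_{w_k}$ is a positive integer multiple of $\lambda_1\cdot[\Vc_{w_{k-1}}]$, so the divisor class of $\overline{\mathcal U}_{w_k}$ in $\overline{\mathcal U}_{w_{k-1}}$ is itself a positive multiple of $\lambda_1|_{\overline{\mathcal U}_{w_{k-1}}}$. Passing to the projective compactification of $\F_N$ on which $\lambda_1$ is ample (Madapusi-Pera and Maulik, cited in the introduction) and pulling back, $\overline{\mathcal U}_{w_k}$ becomes the vanishing locus of a section of a positive power of an ample line bundle on a normal projective variety of dimension $\ell(w_{k-1})\ge m+1\ge 2$. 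A Lefschetz-type connectedness argument then yields connectedness; normality upgrades this to irreducibility, and the induction closes.

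The main obstacle is to make this last step rigorous in characteristic $p$, where Kodaira vanishing can fail and the classical ``ample divisor on a projective variety of dimension $\ge 2$ is connected'' is not directly available. I expect to argue instead by Serre vanishing applied after replacing the line bundle $\lambda_1^{p^{k-1}-1}$ by a sufficiently large multiple $\lambda_1^N$, producing connectedness of a larger effective divisor linearly equivalent to $N\lambda_1$, and then using unibranchness from the local structure theorem to deduce that $\overline{\mathcal U}_{w_k}$ itself is the unique component and hence connected. A secondary point requiring care is verifying that only $w_k$ contributes to the Pieri step in the required way, with no other same-codimension stratum intervening; this relies crucially on the hypothesis $\ell(w)\ge m$, which confines us to the linearly ordered portion of the diagram and rules out the branching between $w_m$ and $w_{m+1}$ in the D-case.
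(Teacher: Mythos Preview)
Your reduction to connectedness of $\overline{\mathcal U}_w$ via normality is the same as the paper's, but from there your inductive Lefschetz argument diverges from the paper's proof and contains a genuine gap. The central claim---that $\overline{\mathcal U}_{w_k}$ is an effective Cartier divisor on $\overline{\mathcal U}_{w_{k-1}}$ whose class is a positive multiple of $\lambda_1$---is not correct. The Bruhat boundary $\overline{\mathcal U}_{w_{k-1}}\setminus {\mathcal U}_{w_{k-1}}$ is the union of $\overline{\mathcal U}_v$ over \emph{all} Weyl group elements $v<w_{k-1}$ of colength~$1$, and the Pieri formula $\lambda_1\cdot[\overline{\mathcal U}_{w_{k-1}}]=\sum_v[\overline{\mathcal U}_v]$ involves many non-final $v$, not just $w_k$. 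So at best the \emph{sum} of these divisors has class $\lambda_1$, which only gives connectedness of the union, not of $\overline{\mathcal U}_{w_k}$ alone. Moreover, the identities $[\Vc_{w_k}]=c\,\lambda_1\cdot[\Vc_{w_{k-1}}]$ live in ${\rm CH}^*({\mathcal F}_N)$ and do not pull back to relations in ${\rm Pic}(\overline{\mathcal U}_{w_{k-1}})$. A further obstruction is that $\pi^*\lambda_1$ is \emph{not} ample on $\overline{\mathcal U}_{w_{k-1}}$: although ${\mathcal U}_{w_{k-1}}\to{\mathcal V}_{w_{k-1}}$ is finite \'etale, the closure contains non-final strata over which the map to ${\mathcal F}_N$ has positive-dimensional fibres, so the pullback of an ample class is only big and nef, and your Lefschetz step does not apply.

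The paper's argument circumvents all of this by a $1$-skeleton method rather than induction on codimension. It establishes four facts: (i) the $\Vc_{w_i}$ are connected in ${\mathcal F}_N$ for $i<2m$ (here ampleness of $\lambda_1$ is used on ${\mathcal F}_N$, not on the flag space); (ii) every irreducible component of every $\overline{\mathcal U}_w$ contains a point of ${\mathcal U}_1$ (via a Raynaud-trick induction as in \cite{E-vdG1}); (iii) the union $\bigcup_{i=2}^m\overline{\mathcal U}_{s_i}$ is connected in each fibre over the superspecial locus, proved by a direct analysis of $p$-unitary flags on $U_1/U_0$ in which each $\overline{\mathcal U}_{s_i}$ is a ${\PP}^1$ or a Fermat curve meeting the others in ${\FF}_{p^2}$-rational points; and (iv) for $i\le m$ the reduced word for $w_i$ contains every simple reflection, so $\overline{\mathcal U}_{w_i}\supset\bigcup_j\overline{\mathcal U}_{s_j}$. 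Facts (i)--(iv) together give connectedness of $\overline{\mathcal U}_{w_i}$; it is precisely fact (iv) that pinpoints the hypothesis $\ell(w)\ge m$, since for shorter final elements not all simple reflections occur in a reduced expression. Your proposal does not supply a substitute for (ii) and (iii), and these are where the actual work lies.
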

\begin{proof}
(We do the $B$-case, leaving the other case to the reader.)
The idea behind the proof is to show that for $1 \leq i \leq m$ the stratum
$\overline{\mathcal U}_{w_i}$ is connected in the flag space ${\mathcal B}_N$.
Note that ${\mathcal F}_N$ is connected by our assumptions.
By Theorem \ref{height1structure} the stratum $\overline{\mathcal U}_{w_i}$
is normal, so if it is connected it must be irreducible. 
But then its image
$\overline{\mathcal V}_{w_i}$ in ${\mathcal F}_N$ is irreducible as well.
This shows the
advantage of working on the flag space. 

To show that $\overline{\mathcal U}_{w_i}$ is connected in ${\mathcal B}_N$
we use that its $1$-skeleton is connected, that is, that the union of the
$1$-dimensional strata that it contains, is connected and that every
irreducible component of $\overline{\mathcal U}_{w_i}$ intersects the
$1$-skeleton. To do that we 
prove the following facts:
\begin{enumerate}
\item{} The loci $\overline{\mathcal V}_{w_i}$ in ${\mathcal F}_N$ are
connected for $i<2m$ (that is, for $w_i\neq 1$).
\item{} Any irreducible component of any $\overline{\mathcal U}_{w}$ 
contains a point of ${\mathcal U}_1$.
\item{} The union $\cup_{i=2}^m \, \overline{\mathcal U}_{s_i}$ intersected
with a fibre of ${\mathcal B}_N \to {\mathcal F}_N$ over a point of the
superspecial locus ${\mathcal V}_{1}$ is connected.
\item{} For $1 \leq i \leq m$ the locus $\overline{\mathcal U}_{w_i}$ 
contains $\cup_{i=1}^m \overline{\mathcal U}_{s_i}$.
\end{enumerate}

In the proof we use the fact that the closure of strata on the flag
space is given by the Bruhat order in the Weyl group: ${\mathcal U}_v$
occurs in the closure of ${\mathcal U}_w$ if $v\geq w$ in the Bruhat order.
Furthermore, we observe that one knows by \cite{M}, \cite{P1}  
that $\lambda_1$ is an ample class. 

Together (1) and (3) will prove that the locus 
$\cup_{i=1}^m \overline{\mathcal U}_{s_i}$ (whose image in ${\mathcal F}_N$
is $\overline{\mathcal V}_{2m-1}$) is connected. We begin by proving (1).

Sections of a sufficiently high multiple of $\lambda_1$ embed ${\mathcal F}_N$
into projective space and we take its closure $\overline{\mathcal F}_N$.
By the result of Theorem \ref{Oddcycleclasses} 
(resp.\ \ref{Untwistedclasses} and \ref{Twistedclasses}) we know that the cycle
class $[\overline{\mathcal V}_{w_i}]$ is a multiple of $\lambda_1^{i-1}$,
so these loci are connected in $\overline{\mathcal F}_N$ for $i-1 <
\dim{\mathcal F}_N$. 
In particular, the $1$-dimensional 
locus $\overline{\mathcal V}_{w_{2m-1}}$ in ${\mathcal F}_N$ 
(which equals its closure in $\overline{\mathcal F}_N$) is connected.
On any irreducible component of $Y$ of $\overline{\mathcal V}_{w_i}$
in $\overline{\mathcal F}_N$ 
with $i <2m-1$ the intersection with $\overline{\mathcal V}_{w_{2m-1}}$
is cut out by a multiple of a positive power of
$\lambda_1$, hence it intersects this locus (in ${\mathcal F}_N$).
Since ${\mathcal V}_{w_i}$ contains $\overline{\mathcal V}_{w_{2m-1}}$
for $i<2m-1$ the connectedness follows. 

To prove (4) consider the reduced expression for $w_i$ for $i\leq m$: 
it is $s_is_{i+1}\cdots s_m s_{m-1}\cdots s_1$, see \cite[Lemma 11.1]{E-vdG2}. 
This shows that all
the $s_i$ occur in it and we see that the $\overline{\mathcal U}_{s_i}$ 
for $i=1,\ldots,m$
occur in the closure of $\overline{\mathcal U}_{w_i}$.

The proof of (2) is similar to the 
proof of \cite[Prop.\ 6.1]{E-vdG1} and uses induction on the Bruhat order. 
If $\ell(w) \leq 2m-2$ then $\overline{\mathcal U}_w$ is proper in
${\mathcal B}_N$. If an irreducible component has 
a non-empty intersection with  a $\overline{\mathcal U}_{w'}$ 
with $w'>w$, then induction provides a point of ${\mathcal U}_1$;
otherwise we can apply a version of the Raynaud trick as in
\cite[Lemma 6.2]{E-vdG1} and conclude that $w=1$. 
If $\ell(w)=2m-1$ then the image of 
any irreducible component $Y$ of $\overline{\mathcal U}_w$
in ${\mathcal F}_N$ is either contained in $\overline{\mathcal V}_{w_3}$ 
and then $Y$ is proper in ${\mathcal B}_N$ 
or the image coincides with $\overline{\mathcal V}_{w_2}$.
In the latter case it maps in a generically finite way to it and therefore
any irreducible component $Y$ of $\overline{\mathcal U}_w$ 
intersects the fibres over the superspecial points, hence by induction
contains a point of~${\mathcal U}_1$. 

For (3) we now look in the fibre $Z$ of the flag space over the 
image of a point of ${\mathcal U}_{1}$. 
This corresponds to a K3 surfaces for which the Hodge
filtration $U_{-1}\subset U_0 \subset U_1 \subset U_2=H$
coincides with the conjugate filtration
$U_{-1}^c\subset U_0^c \subset U_1^c \subset U_2^c=H$.
Moreover, we have the identifications
$$
F^*(U_0)\cong (U_2^c/U^c_1)=(U_0^c)^{\vee}= U_0^{\vee},
$$
given by Cartier and the intersection pairing and similarly
$$
F^*(U_1/U_0)\cong (U_1/U_0)^{\vee},
$$
giving $U_0$ and $U_1/U_0$ (and also $U_2/U_1)$) the structure of a $p$-unitary space. Indeed,
if $S$ is an ${\FF}_p$-scheme then a $p$-unitary vector bundle ${\mathcal E}$ over $S$ is a
vector bundle together with an isomorphism $F^*({\mathcal E})\cong {\mathcal E}^*$ with $F$ the absolute Frobenius. 
This gives rise to a bi-additive map 
$\langle \, , \, \rangle: {\mathcal E}\times {\mathcal E} \to {\mathcal O}_S$
satisfying 
$\langle f x, y\rangle = f^p\langle x, y\rangle$ 
and 
$\langle x,fy\rangle = f \langle x, y \rangle$ for $f$ a section 
of ${\mathcal O}_S$.
In the \'etale topology this notion is equivalent 
to a local system of ${\FF}_{p^2}$-vector spaces,
cf.\ \cite[Prop.\ 7.2]{E-vdG1}.

In case that $S={\rm Spec}({\FF}_{p^2})$ we can then consider the
flag space $Z$ of complete $p$-unitary flags on $U_1/U_0$.
The smallest $p$-unitary stratum there is the stratum of flags that
coincide with their $p$-unitary dual.
Such flags are defined over ${\FF}_{p^2}$
as one sees by taking the dual once more.

So we look now at self-dual flags $A_{\bullet}= \{ A_1 \subset A_2 \subset 
\cdots \subset A_{2m-1}\}$ 
on $A=U_1/U_0$ and we compare
the flag $A_{\bullet}^{(p)}$ with the flag $A_{\bullet}$. 
(Note that the indices $i$ now run from $1$ to $m-1$ instead of from $1$ to $m$
since we leave $U_0$ fixed.) 
For an element $s=(i, i+1)$ with $1 \leq i \leq m-2$ we look at 
the intersection of the stratum $\overline{\mathcal U}_{s}$ with the fibre $Z$.
It consists of those flags $A_{\bullet}$ with the property that the 
steps $A_j$ for $j\neq i$ and $j\neq 2m-1-i$ are ${\FF}_{p^2}$-rational
and that for all $j$ we have $A_{j}= A_{2m-1-j}^{\bot}$.
We see that we can choose $A_i$ freely by prescribing its image in
$A_{i+1}/A_{i-1}$, hence this locus is a ${\PP}^1$. 
In case $s=(m-1, m+1)$ we have to choose a space $A_{m-1}$ and its orthogonal 
$A_m=A_{m-1}^{\bot}$
in $A_{m-1} \subset A_{m-1}\subset A_m \subset A_{m+1}$. Now all
non-degenerate $p$-unitary forms are equivalent, so we may choose the
form $x^{p+1}+y^{p+1}+z^{p+1}$ on the $3$-dimensional space $A_{m+1}/A_{m-2}$.
So in this case $\overline{\mathcal U}_s$ is isomorphic to the Fermat curve.
The points of ${\mathcal U}_1$ are the ${\FF}_{p^2}$-rational points on it.

In case the space $A=U_1/U_0$ is even-dimensional, say $\dim A=2m-2$
the same argument works for $s_i$ with $1\leq i \leq m-2$. 
For $s=(m-2, m)(m-1, m+1)$ (resp.\ for $s=(m-1,m)$) 
we remark that it equals $s' s_{m-2}s'$ with $s'=(m-1, m)$,
hence we find a ${\PP}^1$ by picking a flag $A_{m-2} \subset
A_{m-1} \subset A_{m}$. 

This shows that we remain in the same connected component of 
$\overline{\mathcal U}^{(1)} \cap Z$, 
with ${\mathcal U}^{(1)}$ the union of the $1$-dimensional strata ${\mathcal U}_v$, if we change the flag $A_{\bullet}$ at
place $i$ and $2m-1-i$ compatibly. By Lemma 7.6 of \cite{E-vdG1}
this implies that $\overline{\mathcal U}^{(1)} \cap Z$ is connected.

\end{proof}

\end{document}